\documentclass[a4paper, leqno, 11pt]{amsart}
\usepackage{amsfonts}
\usepackage{amssymb}
\usepackage{graphicx}
\graphicspath{ {CePC/Bureau/} }
\usepackage[applemac,utf8]{inputenc}
\usepackage{color}
\usepackage{hyperref}
\usepackage{mathdots}
\usepackage{float}
\newtheorem{lemma}{Lemma}[section]

\newtheorem{definition}[lemma]{Definition}
\newtheorem{proposition}[lemma]{Proposition}
\newtheorem{theorem}[lemma]{Theorem}
\newtheorem{remark}[lemma]{Remark}

\newtheorem{corollary}[lemma]{Corollary}

\overfullrule10pt

\title[Infinite products of nonnegative matrices]{Normalized image of a vector by an infinite product of nonnegative matrices}
\author[]{}
\address{Alain Thomas,
448 all\'ee des Cantons, 83640 Plan d'Aups Sainte Baume, France}
\email{alain-yves.thomas@laposte.net}
\keywords{Infinite products of nonnegative matrices, discrete mathematics, multifractal analysis, Bernoulli convolutions}
\subjclass{15B48, 28A12}

\date{July 2021}

\parindent0pt\setcounter{section}{-1}

\begin{document}

\maketitle

\centerline{\it Dedicated to the memory of Adrien Douady}

\begin{abstract}A sofic measure is the image of a Markov probability measure by a continuous morphism, and can be represented by means of products of matrices $A_n$ that belong to a finite set of nonnegative matrices. To prove that the multifractal formalism holds for such a measure, it is necessary to know whenever the sequence $n\mapsto\frac{A_1\cdots A_nv}{\Vert A_1\cdots A_nv\Vert}$ converges when $v$ is a positive vector. We give a sufficient condition for this convergence, that we use for the study of one Bernoulli convolution.\end{abstract}

\section{Introduction}

Let $\mathcal A=(A_n)_{n\in\mathbb N}$ be a sequence of nonnegative $d\times d$ matrices, meaning that the entries of $A_n$ are nonnegative, and let $P_n:=A_1\cdots A_n$. More generally, $P_{m,n}:=A_{m+1}\cdots A_n$. Since we consider any square matrix $M$ as a map $v\mapsto Mv$, we call "normalized image of the column-vector $v$ by the infinite product of the matrices $A_n$" the vector $v_{\mathcal A}:=\lim_{n\to\infty}\frac{P_nv}{\Vert P_nv\Vert}$, if existing. The sequence $n\mapsto\frac{P_nv}{\Vert P_nv\Vert}$ often converges while $n\mapsto\frac{P_n}{\Vert P_n\Vert}$ often diverges. To give an example let $(A_n)_{n\in\mathbb N}\in\{A,B\}^{\mathbb N}$, not eventually constant, where $A$ and $B$ are positive (that is, the entries of $A$ and $B$ are positive) and do not have a common left-eigenvector. Then for any nonnegative $v\ne0$ the sequence $n\mapsto\frac{P_nv}{\Vert P_nv\Vert}$ converges and $n\mapsto\frac{P_n}{\Vert P_n\Vert}$ diverges. 

The three sections are relatively independent. In Section~\ref{tsf} we give some precisions about both sequences and in Section~\ref{limpoints} we specify in what sense the sequence of matrices $n\mapsto\frac{P_n}{\Vert P_n\Vert}$ in general diverges. In Section \ref{sm} we indicate how the convergence of $n\mapsto\frac{P_nv}{\Vert P_nv\Vert}$ can be used to study the linearly representable measures, for instance the Bernoulli convolutions in Pisot base with finite R\'enyi expansion \cite{Ren}, \cite[\S2.4]{OST05}. We detail in Appendix A how to use Section 1, for one example of Bernoulli convolution.

See for instance \cite{BL85,Bo,DL92,DL01,EF97,F,Go,Koz13,L89} for various results about the infinite products of matrices. In \cite{OT13} (resp. \cite{T}) there is a necessary and sufficient condition on the finite sets $\mathcal M$ of $2\times2$ nonnegative matrices, for the sequences $n\mapsto\frac{P_nv}{\Vert P_nv\Vert}$ to converge uniformly (resp. pointwise) on $\mathcal M^\mathbb N$. See \cite{BO,BP,Dh,Eri} for the sofic measures and \cite{Erd39,PSS} for the Bernoulli convolutions. For the multifractal analysis and Lyapunov exponents, see \cite{FeandLau02,Fen03,Fen03bis,Fen04,Fen09,FH10,FO,Led84}.

We use preferably the norm $\Vert X\Vert:=$ the sum of the absolute values of the entries of $X$ (matrix or vector), but for any couple of norm $(N_1,N_2)$ the convergence of a sequence $n\mapsto\frac{X_n}{N_1(X_n)}$ is equivalent to the one of $n\mapsto\frac{X_n}{N_2(X_n)}$ because $\frac{X_n}{N_2(X_n)}=\frac{X_n/N_1(X_n)}{N_2(X_n/N_1(X_n))}$. We call $\mathcal I(X)$ the set of the indices of the nonnull entries in $X$, and $\mathcal Z(X)$ the $(0,1)$-matrix or vector such that $\mathcal I(\mathcal Z(M))=\mathcal I(M)$. We denote by $u_1,\dots,u_d$ the canonical column-vectors and by $u$ their sum. The shift $\sigma$ is defined by $\sigma((x_n)_{n\in\mathbb N})=(x_{n+1})_{n\in\mathbb N}$.

{\bf Acknowledgement. --} {\it This paper was written in collaboration with Eric Olivier, in particular the second section about multifractal analysis.}

\section{The sequence $n\mapsto\frac{P_nv}{\Vert P_nv\Vert}$}\label{tsf}

\subsection{The case of $2\times2$ matrices}\label{2}According to \cite{T}, the only nontrivial cases of divergence of $n\mapsto\frac{P_nv}{\Vert P_nv\Vert}$ when the matrices $A_n$ belong to a finite set $\mathcal M$ of $2\times2$ nonnegative matrices, are
the cases

$\mathcal M\ni\left(\begin{smallmatrix}a&b\\c&0\end{smallmatrix}\right),\left(\begin{smallmatrix}a'&0\\0&d'\end{smallmatrix}\right)$ with $abc\ne0<d'-a'$, and 

$\mathcal M\ni\left(\begin{smallmatrix}0&b\\c&d\end{smallmatrix}\right),\left(\begin{smallmatrix}a'&0\\0&d'\end{smallmatrix}\right)$ with $bcd\ne0<a'-d'$.

Consider for instance the product\begin{equation}\label{continuedfraction}Q_k:=\left(\begin{smallmatrix}\frac12&0\\0&1\end{smallmatrix}\right)^{n_1}\left(\begin{smallmatrix}1&1\\1&0\end{smallmatrix}\right)\cdots\left(\begin{smallmatrix}\frac12&0\\0&1\end{smallmatrix}\right)^{n_k}\left(\begin{smallmatrix}1&1\\1&0\end{smallmatrix}\right)=:\left(\begin{smallmatrix}p_k&2^{-n_k}p_{k-1}\\q_k&2^{-n_k}q_{k-1}\end{smallmatrix}\right)\end{equation}with $\left\{\begin{array}{l}p_k=2^{-n_k}p_{k-1}+2^{-n_{k-1}}p_{k-2}\\q_k=2^{-n_k}q_{k-1}+2^{-n_{k-1}}q_{k-2}\end{array}\right.$ (see\cite{Per}).

For any $k$, if $n_1,\dots,n_{k-1}$ are known, the limits when $n_k\to\infty$ of the differences $\frac1{p_k+q_k}\left(\begin{smallmatrix}p_k\\q_k\end{smallmatrix}\right)-\frac1{p_{k-2}+q_{k-2}}\left(\begin{smallmatrix}p_{k-2}\\q_{k-2}\end{smallmatrix}\right)$ and $\frac{Q_kv}{\Vert Q_kv\Vert}-\frac1{p_{k-i}+q_{k-i}}\left(\begin{smallmatrix}p_{k-i}\\q_{k-i}\end{smallmatrix}\right)$ with $i=1$ if $v_1=0$ or $i=0$ otherwise, are null. So, choosing $n_k$ large enough they have norm $\le2^{-k}\varepsilon$. Consequently $\frac{Q_kv}{\Vert Q_kv\Vert}$ is approximately $\frac1{p_1+q_1}\left(\begin{smallmatrix}p_1\\q_1\end{smallmatrix}\right)$ or $\frac1{p_2+q_2}\left(\begin{smallmatrix}p_2\\q_2\end{smallmatrix}\right)$ and the error term has norm at most~$\varepsilon$. Choosing $\varepsilon$ small enough, the sequence $n\mapsto\frac{Q_kv}{\Vert Q_kv\Vert}$ diverges because (\ref{continuedfraction}) implies $\left(\begin{smallmatrix}p_1\\q_1\end{smallmatrix}\right),\left(\begin{smallmatrix}p_2\\q_2\end{smallmatrix}\right)$ non-collinear.

\subsection{Rank-one properties}The previous subsection gives an example of product $P_n$ of matrices belonging to a set of two matrices, that cannot be approximated by a rank-one matrix (a matrix $R_n=c_nr_n$ where $c_n$ is a column-vector and $r_n$ a row-vector). Because, looking at the matrices $Q_{k,h}:=Q_k\left(\begin{smallmatrix}\frac1{2^h}&0\\0&1\end{smallmatrix}\right)$ for $h=0,1,\dots,n_{k+1}$, the ratio $\frac{\Vert Q_{k,h}u_1\Vert}{\Vert Q_{k,h}u_2\Vert}$ is $>1$ for $h=0$ and $<1$ for $h=n_{k+1}$, implying $\exists h(k)\ \frac{\Vert Q_{k,h(k)}u_1\Vert}{\Vert Q_{k,h(k)}u_2\Vert}\in[1,2]$, while the limit-points of the normalized columns of $Q_{k,h(k)}$ are non-collinear.

\begin{theorem}\label{rankone}(i) For any sequence $(M_n)_{n\in\mathbb N}$ of complex-valued $d\times d$ matrices with singular values $\delta_1(n)\ge\delta_2(n)\ge\dots$, one has the equivalences$$\begin{array}{l}\lim_{n\to\infty}\frac{\delta_2(n)}{\delta_1(n)}=0\ \Leftrightarrow\ \exists (c_n,r_n)\ \lim_{n\to\infty}\big(\frac{M_n}{\Vert M_n\Vert}-c_nr_n\big)=0;\\\exists\lim_{n\to\infty}\frac{M_nv}{\Vert M_nv\Vert}\text{ indep. of }v>0\Leftrightarrow\exists (c,r_n)\ \lim_{n\to\infty}\big(\frac{M_n}{\Vert M_n\Vert}-cr_n\big)=0.\end{array}$$

(ii) Let $\mathcal A=(A)_{n\in\mathbb N}$ with $A_n$ nonnegative. The limit of $n\mapsto\frac{P_nv}{\Vert P_nv\Vert}$ exists and is constant on the set $\mathcal V_\mathcal A:=\{v\ge0\;;\;\inf_n\frac{\Vert P_nv\Vert}{\Vert P_n\Vert}>0\}$ if there exists a limit-point $P^{(s)}:=\lim_{k\to\infty}\frac{P_{n_k}}{\Vert P_{n_k}\Vert}$ ($s=(n_k)_{k\in\mathbb N}$ increasing) such that

$*$ $\exists K\in\mathbb N\ \inf_{k\ge K}\inf_{n_{k+1}\le n<n_{k+1}}\frac{\Vert P^{(s)}P_{n_k,n}\Vert}{\Vert P_{n_k,n}\Vert}>0$,

$*$ for any limit-point $Q$ of the sequence $k\mapsto\frac{P_{n_k,n_{k+1}}}{\Vert P_{n_k,n_{k+1}}\Vert}$ there does not exist a permutation matrix $P$ such that $P^{-1}QP=\left(\begin{smallmatrix}A&0&B\\0&C&D\\0&0&E\end{smallmatrix}\right)$ with $A$ and $C$ column-allowable (that is, without null column).\end{theorem}

\begin{proof}[\bf Proof](i) We write $M_n=S_n\left(\begin{smallmatrix}\delta_1(n)&\dots&0\\\vdots&\ddots&\vdots\\0&\dots&\delta_d(n)\end{smallmatrix}\right){^t\overline{T_n}}$ with $S_n$ and $T_n$ unitary matrices. If $\lim_{n\to\infty}\frac{\delta_2(n)}{\delta_1(n)}=0$, the $\delta_i(n), i\ge2$, are in $o(\Vert M_n\Vert)$ and consequently $\frac{M_n}{\Vert M_n\Vert}=(\frac{\delta_1(n)}{\Vert M_n\Vert}S_nu_1)({^tu_1}{^t\overline{T_n}})+o(1)=:c_nr_n+o(1)$.

Suppose now that $\frac{\delta_2(n)}{\delta_1(n)}$ does not converge to $0$. There exist an increasing sequence $(n_k)_{k\in\mathbb N}$ such that $\lim_{k\to\infty}\frac{M_{n_k}}{\Vert M_{n_k}\Vert}=S\Delta{^t\overline{T}}$, where $S,T$ are unitary and the two first diagonal entries of the diagonal matrix $\Delta$ are positive. Consequently rank$(S\Delta{^t\overline{T}})$=rank$(\Delta)\ge2$. The vectors $c_n,r_n$ such that $\lim_{n\to\infty}\Big(\frac{M_n}{\Vert M_n\Vert}-c_nr_n\Big)=0$ do not exist: this should imply that $c_{n_k}r_{n_k}$ (of rank $1$) tends to $S\Delta{^t\overline{T}}$ (of rank $\ge2$).

To prove the second equivalence, suppose $\exists w=\lim_{n\to\infty}\frac{M_nv}{\Vert M_nv\Vert}$ independent of $v>0$. Let $u_{j,k}:=u_j+\frac1ku$. There exists $n_k$ such that $n\ge n_k\Rightarrow\big\Vert \frac{M_nu_{j,k}}{\Vert M_nu_{j,k}\Vert}-w\big\Vert\le\frac1k$. Using the classical inequality $\big\Vert\frac x{\Vert x\Vert}-\frac y{\Vert y\Vert}\big\Vert\le\frac{2\Vert x-y\Vert}{\max(\Vert x\Vert,\Vert y\Vert)}$ we deduce $\big\Vert \frac{M_nu_j}{\Vert M_nu_j\Vert}-w\big\Vert\le\frac{2\Vert M_n\Vert}{k\Vert M_nu_j\Vert}+\frac1k$. We multiply by $\frac{\Vert M_nu_j\Vert}{\Vert M_n\Vert}$ and conclude that $\lim_{n\to\infty}\big(\frac{M_n}{\Vert M_n\Vert}-cr_n\big)=0$ with $c=w$ and $r_n$ of entries $\frac{\Vert M_nu_j\Vert}{\Vert M_n\Vert}$.

Conversely suppose $\exists(c,r_n)\ \lim_{n\to\infty}\big(\frac{M_n}{\Vert M_n\Vert}-cr_n\big)=0$. We multiply at the right by $\frac{\Vert M_n\Vert}{\Vert M_nv\Vert}v$ (bounded vector) and deduce $\lim_{n\to\infty}\big\Vert\frac{M_nv}{\Vert M_nv\Vert}-\lambda_nc\big\Vert=0$ with $\lambda_n=\frac{\Vert M_n\Vert}{\Vert M_nv\Vert}r_nv$ . This implies $\Vert\lambda_n c\Vert\to1$, $\lambda_n\to\frac1{\Vert c\Vert}$ and $\frac{M_nv}{\Vert M_nv\Vert}\to\frac c{\Vert c\Vert}$.

(ii) We denote by $Q_n$ the matrix $\frac{P_{n_k,n}}{\Vert P_{n_k
,n}\Vert}$, where $k$ is the integer such that $n_{k+1}\le n<n_{k+2}$. The first condition of (ii) is equivalent to say that any convergent subsequence of $n\mapsto\Vert P^{(s)}Q_n\Vert$ has a positive limit, or to say that $P^{(s)}Q^{(s,s')}\ne0$ for any limit-point $Q^{(s,s')}:=\lim_{i\to\infty}Q_{n'_i}$ ($s'=(n'_i)_{i\in\mathbb N}$ increasing). For such a sequence $s'$, let us prove that\begin{equation}\label{PQ}\exists P^{(s')}:=\lim_{i\to\infty}\frac{P_{n'_i}}{\Vert P_{n'_i}\Vert}\text{ and }P^{(s')}=\frac{P{(s)}Q^{(s,s')}}{\Vert P{(s)}Q^{(s,s')}\Vert}\end{equation}This is a consequence of the following lemma, because $P_{n'_i}=P_{n_k}P_{n_k,n'_i}$ with $n_{k+1}\le n'_i<n_{k+2}$.

\begin{lemma}\label{abovernormab}If $\lim_{n\to\infty}\frac{A_n}{\Vert A_n\Vert}=A$, $\lim_{n\to\infty}\frac{B_n}{\Vert B_n\Vert}=B$ and $AB\ne0$, then there exists $\lim_{n\to\infty}\frac{A_nB_n}{\Vert A_nB_n\Vert}=\frac{AB}{\Vert AB\Vert}$.\end{lemma}

\begin{proof}[\bf Proof]It is simply because $\lim_{n\to\infty}\frac{A_n}{\Vert A_n\Vert}\frac{B_n}{\Vert B_n\Vert}=AB$ implies that

$\lim_{n\to\infty}\frac{\Vert A_nB_n\Vert}{\Vert A_n\Vert\Vert B_n\Vert}$ exists and is equal to $\Vert AB\Vert$.\end{proof}

Suppose there exist two distinct extreme points $c_1,c_2$ of the convex hull $\mathcal C$ of the normalized vectors $\frac{P^{(s)}u_j}{\Vert P^{(s)}u_j\Vert}$ ($j$ such that $P^{(s)}u_j\ne0$). From the definition of the extreme points, $c_1,c_2$ cannot be positive linear combinations of several vectors $\frac{P^{(s)}u_j}{\Vert P^{(s)}u_j\Vert}$, hence there exist two nonempty sets $J_1,J_2$ such that $j\in J_1\Leftrightarrow\frac{P^{(s)}u_j}{\Vert P^{(s)}u_j\Vert}=c_1$ and $j\in J_2\Leftrightarrow\frac{P^{(s)}u_j}{\Vert P^{(s)}u_j\Vert}=c_2$.

By compacity there exists a limit-point $Q$ of the sequence $k\mapsto\frac{P_{n_k,n_{k+1}}}{\Vert P_{n_k,n_{k+1}}\Vert}$. By (\ref{PQ}), $P^{(s)}=\frac{P^{(s)}Q}{\Vert P^{(s)}Q\Vert}$, implying that the columns of $Q$ with index in $J_1$ (resp. $J_2$) have null entries except the one indexed by $J_1$ (resp. $J_2$). This contradicts the second hypothesis of~(ii). So $\mathcal C$ has only one extreme point~$c$ and, by the Krein-Milman theorem, $\mathcal C$ is the convex hull of its extreme points, $\mathcal C=\{c\}$ and $P^{(s)}=cr$.

For any couple of vectors $v,w\in\mathcal V_\mathcal A$, we have to prove that any limit-point $\lim_{k\to\infty}\frac{P_{m_i}v}{\Vert P_{m_i}v\Vert}$ is equal to $\lim_{k\to\infty}\frac{P_{n_k}w}{\Vert P_{n_k}w\Vert}$. By compacity we can replace $t:=(m_i)_{i\in\mathbb N}$ by a subsequence, for $i\mapsto Q_{m_i}$ to converge. As seen, the limit-point $\lim_{i\to\infty}\frac{P_{m_i}}{\Vert P_{m_i}\Vert}$ exists and is equal to $\frac{P^{(s)}Q^{(s,t)}}{\Vert P^{(s)}Q^{(s,t)}\Vert}$. Since $v,w\in\mathcal V_\mathcal A$, $P^{(s)}Q^{(s,t)}v$ and $P^{(s)}w$ are nonnull. $\frac{P_{m_i}v}{\Vert P_{m_i}v\Vert}=\frac{(P_{m_i}/\Vert P_{m_i}\Vert)v}{\Vert(P_{m_i}/\Vert P_{m_i}\Vert)v\Vert}$ tends to $\frac{P^{(s)}Q^{(s,t)}v}{\Vert P^{(s)}Q^{(s,t)}v\Vert}$ and $\frac{P_{n_k}w}{\Vert P_{n_k}w\Vert}=\frac{(P_{n_k}/\Vert P_{n_k}\Vert)w}{\Vert(P_{n_k}/\Vert P_{n_k}\Vert)w\Vert}$ tends to $\frac{P^{(s)}w}{\Vert P^{(s)}w\Vert}$; both limits are equal to $c$.\end{proof}

\begin{remark}In the first item of Theorem \ref{rankone}, the assertions of the first equivalence are equivalent to "all the convergent subsequences of $n\mapsto\frac{M_n}{\Vert M_n\Vert}$ converge to a rank-one matrix", because the set of the rank-one normalized matrices is closed. The assertions of the second equivalence imply that $c$ is colinear to $\lim_{n\to\infty}\frac{M_nv}{\Vert M_nv\Vert}$ and to the limit of the first singular vector of $M_n$.

In the counterexample $M_n=\prod_{k=1}^n\left(\begin{smallmatrix}1&0&1\\0&1&0\\0&0&1\end{smallmatrix}\right)^{2^k}\left(\begin{smallmatrix}1&0&0\\0&1&1\\0&0&1\end{smallmatrix}\right)^{2^k}$, $c_n$ is non-constant.\end{remark}

\begin{remark}The second item of Theorem \ref{rankone} is useful when it is not possible to compute $P_n$ nor the limit-point $P^{(s)}$ but possible to have information about $\mathcal Z(P^{(s)})$, see the example of Appendix~\ref{example}. On the other hand, in the example of Appendix \ref{3} one can compute $P_n$.

In the counterexample $\forall n\ A_n=\left(\begin{smallmatrix}1&0&1\\0&1&1\\0&0&\frac12\end{smallmatrix}\right)$, the second condition of Theorem~\ref{rankone}(ii) is not satisfied and $\lim_{n\to\infty}\frac{P_nv}{\Vert P_nv\Vert}$ depends on $v$.\end{remark}

\section{Sofic measures, multifractal analysis}\label{sm}

\subsection{Sofic and linearly representable measures (i.e. representable by products of matrices)}\label{Mslr}We give in \cite[Definition 5]{T2} a suitable definition of both notions to prove that they are equivalent \cite[Theorem 7]{T2}. This result is known, we use the methods of \cite[Theorem 4.28]{BP} and \cite{Eri}.

\subsection{Multifractal formalism and weak-Gibbs property}

Both notions are related, and the weak-Gibbs property of a sofic measure is related to the uniform convergence of a sequence of the form $n\mapsto\frac{P_nv}{\Vert P_nv\Vert}$ (Theorem \ref{weak-Gibbs}).

The {\it multifractal analysis} \cite{G,HP,Ols,Pes} is a particular way of analysing the local structure of measures. Let $\mu$ be a probability-measure on $[0,1]$; its {\it singularity spectrum} $\tau_{\rm sing}:\mathbb R\to[-\infty,1]$ is defined by
$$
\begin{array}{lcl}\tau_{\rm sing}(\alpha)&:=&\text{H-dim}(E_\mu(\alpha))\quad(\text{Hausdorff dimension of }E_\mu(\alpha)),\text{ where}\\
E_\mu(\alpha)&:=&\{x\;;\;\dim_\mu(x)\text{ exists and }\dim_\mu(x)=\alpha\}\quad(\text{level-set of }\mu),\\
\dim_\mu(x)&:=&\lim_{r\to0}\log(\mu([x-r,x+r]))/\log r\quad(\text{local dimension of }\mu)\end{array}
$$
(by convention, the Hausdorff dimension of the empty set is $-\infty$). Its {\it scale spectrum} (or {\it$L^q$-spectrum}) $\tau_{\rm scale}:\mathbb R\to[-\infty,1]$ is defined by
$$
\tau_{\rm scale}(q):=\liminf_{r\to0}\Big(\log_r\Big(\inf_\mathcal I\Big(\sum_{k=1}^n(\mu(I_k))^q\Big)\Big)\Big),
$$
where $\mathcal I$ is the set of the covers of the support of $\mu$ by closed intervals of length $r$.

The scale spectrum is easier to compute or to approximate than the singularity spectrum. The multifractal formalism, defined as follows, enables to compute the second when one knows the first.\begin{definition}One says that $\mu$ satisfies the {\it multifractal formalism} if the singularity spectrum is the Legendre-transform conjugate of the scale spectrum (see for instance \cite{HJKPS}) in the sense that, for any $\alpha\in\mathbb R$,$$
\tau_{\rm sing}(\alpha)=\inf_{q\in\mathbb R}(\alpha q-\tau_{\rm scale}(q)).
$$\end{definition}

According to \cite[Theorem A']{FO}, the {\it weak-Gibbs measures} defined by Yuri \cite[\S5]{Y} as follows, satisfy the multifractal formalism.

\begin{definition}Let $\mathcal S$ be a system of affine contractions $S_k:[0,1]\mapsto[0,1]$ for
$k\in\{0,\dots,b-1\}$ that satisfy the condition $[0,1)=\bigcup_{k\in\{0,\dots,b-1\}}S_k([0,1))$ (disjoint union).
We define the basic subinterval $[\omega_1\dots\omega_n]_{\mathcal S}\subset[0,1)$, for any word $\omega_1\dots\omega_n\in\{0,\dots,b-1\}^n$, by
$$
[\omega_1\dots\omega_n]_{\mathcal S}:=S_{\omega_1}\circ \cdots\circ S_{\omega_n}([0,1)),
$$
and we say that a probability-measure $\mu$ on $[0,1]$ has the {\sl weak-Gibbs property with respect to $\mathcal S$} if there exists a continuous map $\Phi:\Omega=\{0,\dots,b-1\}^\mathbb N\to\mathbb R$, called a potential of $\mu$, such that
\begin{equation}\label{defGibbs}
\lim_{n\to\infty}\Big(\frac{\mu([\omega_1\dots\omega_n]_{\mathcal S})}{\exp\big(\sum_{k=0}^{n-1}\Phi(\sigma^k\omega\big)}\Big)^{\frac1n}=1\text{ uniformly on }\Omega\ (\sigma\omega:=(\omega_{n+1})_{n\in\mathbb N}).
\end{equation}
\end{definition}

Below is a method to prove the weak-Gibbs property for a sofic measure:

\begin{theorem}\label{weak-Gibbs}
Let $\mu$ be a probability-measure on $[0,1]$, $\mathcal S$ a system of affine contractions \text{$S_k:[0,1]\mapsto[0,1]$} for $k\in\{0,\dots,b-1\}$, and let $\nu$ be the probability-measure defined on $\Omega=\{0,\dots,b-1\}^\mathbb N$ by setting, for any cylinder set $[\omega_1\dots\omega_n]$,
$$
\nu([\omega_1\dots\omega_n])=\mu([\omega_1\dots\omega_n]_\mathcal S).
$$
We suppose that $\nu$ is sofic. Using the linear representation of the sofic measures by matrices $M_i$, row-vectors $r_i$ and column-vector $c$ \cite[Definition 5]{T2}, we suppose that the sequence of column-vectors $n\mapsto c_{\omega,n}:=\frac{M_{\omega_1}\cdots M_{\omega_n}c}{\Vert M_{\omega_1}\cdots M_{\omega_n}c\Vert}$ converges uniformly on $\Omega$ to a limit~$c_\omega$. We also suppose
\begin{align}
&\forall i\in\{0,\dots,b-1\},\ (r_ic_{\omega,n})^{\frac1n}\to1\text{ uniformly in }\omega\in\Omega,\label{randc}\\&\forall\omega\in\Omega,\ M_ic_\omega\ne0.\label{Mandc}
\end{align}
Then $\mu$ is weak-Gibbs with respect to $\mathcal S$ and to the potential $\Phi$ defined by $\Phi(\omega):=\log\Vert M_{\omega_1}c_{\sigma\omega}\Vert$.
\end{theorem}

\begin{proof}[{\bf Proof}]We compute
\begin{equation}\label{muandPhi}\begin{array}{rcl}
\frac{\mu([\omega_1\dots\omega_n]_{\mathcal S})}{\Vert c\Vert}&=&\frac{r_{\omega_1}M_{\omega_2}\cdots M_{\omega_n}c}{\Vert M_{\omega_2}\cdots M_{\omega_n}c\Vert}\cdot\frac{\Vert M_{\omega_2}\cdots M_{\omega_n}c\Vert}{\Vert M_{\omega_3}\cdots M_{\omega_n}c\Vert}\cdots\frac{\Vert M_{\omega_n}c\Vert}{\Vert c\Vert}\\&=&r_{\omega_1}c_{\sigma\omega,n-1}\cdot\Vert M_{\omega_2}c_{\sigma^2\omega,n-2}\Vert\cdots\Vert M_{\omega_n}c_{\sigma^n\omega,0}\Vert\end{array}
\end{equation}
and
\begin{equation}\label{Phiandmu}
\exp\big(\sum_{k=0}^{n-1}\Phi(\sigma^k\omega)\big)=\Vert M_{\omega_1}c_{\sigma\omega}\Vert\cdots\Vert M_{\omega_n}c_{\sigma^n\omega}\Vert.
\end{equation}
By (\ref{randc}), $(r_{\omega_1}c_{\sigma\omega,n-1})^{\frac1n}$ converges uniformly to $1$, as well as $\Vert M_{\omega_1}c_{\sigma\omega}\Vert^{\frac1n}$ because (\ref{Mandc}) and the continuity of $\omega\mapsto c_\omega$ imply $\forall\omega\ \frac1K\le\Vert M_{\omega_1}c_{\sigma\omega}\Vert\le K$ with $K$ constant. Since $c_{\omega,n}$ converges uniformly to $c_\omega$, there exists $\varepsilon_n\to0$ such that, for any $\omega\in\Omega$,
$$
1-\varepsilon_n\le\frac{\Vert M_{\omega_1}c_{\sigma\omega,n-1}\Vert}{\Vert M_{\omega_1}c_{\sigma\omega}\Vert}\le1+\varepsilon_n\ .
$$
So (\ref{muandPhi}) and (\ref{Phiandmu}) imply
$$\begin{array}{rcl}\frac{r_{\omega_1}c_{\sigma\omega,n-1}}{\Vert M_{\omega_1}c_{\sigma\omega}\Vert}(1-\varepsilon_{n-1})\cdots(1-\varepsilon_1)&\le&\frac{\mu([\omega_1\dots\omega_n]_{\mathcal S})/\Vert c\Vert}{\exp\big(\sum_{k=0}^{n-1}\Phi(\sigma^k\omega)\big)}\\&\le&\frac{r_{\omega_1}c_{\sigma\omega,n-1}}{\Vert M_{\omega_1}c_{\sigma\omega}\Vert}\cdot(1+\varepsilon_{n-1})\cdots(1+\varepsilon_1).\end{array}$$
Since $\lim_{n\to\infty}\big(\prod_{k=1}^{n-1}(1\pm\varepsilon_k)\big)^{\frac1n}=1$, $\mu$ satisfies (\ref{defGibbs}).\end{proof}

\subsection{Application to a Bernoulli convolution}\label{application}

From now $\beta\approx1.755$ is the unique real solution of the equation $\beta^3=2\beta^2-\beta+1$, and $\mu_\beta$ is the Bernoulli convolution (see \cite{Erd39,PSS}) associated to $\beta$:
\begin{equation*}\label{ICBMeq}
\mu_\beta:=
\text{\huge $*$}_{n=1}^{\infty}\Big(\frac12\delta_0+\frac12\delta_{(\beta-1)\beta^{-n}}\Big).
\end{equation*}

We choose this value of $\beta$ because the involved matrices have order $7$, while the matrices are very larger in the case $\beta^3=\beta^2+1$ for instance, or small when $\beta$ is multinacci \cite{OST05} or  quadratic.

\begin{lemma}\label{SiM}
The measure $\mu_\beta$ has support $[0,1]$. Setting$$\begin{array}{l}S_0(x):=\frac x\beta, S_1(x):=\frac1\beta+\frac x{\beta^2},S_2(x):=\frac1\beta+\frac1{\beta^2}+\frac x{\beta^4},i_0:=0,i_1:=\frac1\beta,\\i_2:=\frac1\beta-\frac1{\beta^2},i_3:=-\frac1{\beta^2},i_4:=1-\frac1\beta,i_5:=1-\frac1{\beta^2},i_6:=1-\frac1\beta+\frac1{\beta^2},\\2M_0:=\left(\begin{smallmatrix}1&0&0&0&0&0&0\\0&0&1&0&0&0&0\\0&0&0&1&1&0&0\\0&0&0&0&0&0&0\\1&0&0&0&0&0&1\\0&0&0&0&1&0&0\\0&1&0&0&0&0&0\end{smallmatrix}\right)\ 4M_1:=\left(\begin{smallmatrix}0&0&1&1&0&0&0\\0&0&0&0&0&1&0\\0&0&0&1&1&0&0\\1&0&0&0&0&0&0\\0&0&1&0&0&0&0\\0&0&0&0&0&0&0\\0&0&0&0&0&0&0\end{smallmatrix}\right)\ 16M_2:=\left(\begin{smallmatrix}1&0&0&0&1&0&1\\0&0&0&0&0&0&0\\1&0&0&0&0&0&1\\0&0&0&1&1&0&0\\0&0&0&0&1&0&0\\0&0&0&0&0&0&0\\0&0&0&0&0&0&0\end{smallmatrix}\right)\\20\ ^tc:=\left(\begin{smallmatrix}12&8&13&4&12&6&4\end{smallmatrix}\right)\end{array}
$$
one has for any basic interval $[\omega_1\dots\omega_n]_{\mathcal S}=S_{\omega_1}\circ \cdots\circ S_{\omega_n}([0,1))$
\begin{equation}\label{translatedcylinders}
\left(\begin{smallmatrix}\mu_\beta\big(i_0+\frac1{\beta}[\omega_1\dots\omega_n]_{\mathcal S}\big)\\\vdots\\\mu_\beta\big(i_6+\frac1{\beta}[\omega_1\dots\omega_n]_{\mathcal S}\big)\end{smallmatrix}\right)=M_{\omega_1}\cdots M_{\omega_n}c.
\end{equation}
\end{lemma}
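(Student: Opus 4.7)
The strategy combines three ingredients: the self-similarity of $\mu_\beta$, the closedness of the seven-point set $\{i_0,\dots,i_6\}$ under the affine dynamics produced by that self-similarity (which rests on the defining identity $\beta^3=2\beta^2-\beta+1$), and induction on the length $n$ of the cylinder word. The Bernoulli convolution definition immediately yields the IFS equation $\mu_\beta=\frac12\mu_\beta\circ T_0^{-1}+\frac12\mu_\beta\circ T_1^{-1}$, where $T_0(x)=x/\beta$ and $T_1(x)=(\beta-1)/\beta+x/\beta$; equivalently, for every Borel set $A$,
\[
\mu_\beta(A)=\tfrac12\mu_\beta(\beta A)+\tfrac12\mu_\beta(\beta A-(\beta-1)).
\]
Since $S_0,S_1,S_2$ contract by $\beta^{-1},\beta^{-2},\beta^{-4}$ respectively, the matrices $M_0,M_1,M_2$ will arise from iterating this identity $1$, $2$, and $4$ times, which explains the normalizing factors $1/2,1/4,1/16$ attached to them.

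The central algebraic check is that, for each $j\in\{0,\dots,6\}$ and each $\omega\in\{0,1,2\}$, every iterate of the maps $i\mapsto\beta i$ and $i\mapsto\beta i-(\beta-1)$ applied $1$, $2$ or $4$ times to $i_j$ is either again one of the $i_k$, or produces a translation point $i'$ for which $i'+\frac1\beta[\omega_2\cdots\omega_n]_{\mathcal S}$ is disjoint from $[0,1]=\mathrm{supp}(\mu_\beta)$ and therefore contributes $0$. For example $\beta i_1-(\beta-1)=2-\beta$, which equals $i_2=(\beta-1)/\beta^2$ precisely because $\beta^3=2\beta^2-\beta+1$; likewise $\beta i_2-(\beta-1)=-(\beta-1)^2/\beta=-1/\beta^2=i_3$ by the same relation; all remaining identities needed to populate the rows of $M_0,M_1,M_2$ are analogous, and each reduces to the cubic relation or its equivalent $\beta^4=\beta^3+\beta^2+1$.

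With the orbit-closedness in hand, (\ref{translatedcylinders}) will be proved by induction on $n$. In the inductive step I write $[\omega_1\cdots\omega_n]_{\mathcal S}=S_{\omega_1}([\omega_2\cdots\omega_n]_{\mathcal S})$, so that $i_j+\frac1\beta[\omega_1\cdots\omega_n]_{\mathcal S}$ is a translate of $\frac1{\beta^{1+\ell}}[\omega_2\cdots\omega_n]_{\mathcal S}$ with $\ell=1,2,4$ for $\omega_1=0,1,2$. Applying the self-similarity $\ell$ times expresses $\mu_\beta(i_j+\frac1\beta[\omega_1\cdots\omega_n]_{\mathcal S})$ as a sum of $2^\ell$ terms $2^{-\ell}\mu_\beta(i'+\frac1\beta[\omega_2\cdots\omega_n]_{\mathcal S})$; by the orbit-closedness each surviving $i'$ is some $i_k$, and the collected coefficients constitute precisely row $j$ of $M_{\omega_1}$. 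The base case $n=0$, $[\epsilon]_{\mathcal S}=[0,1)$, amounts to the equalities $\mu_\beta\bigl([i_j,i_j+1/\beta)\bigr)=C_j$; these values can be obtained either by iterating the self-similarity from $\mu_\beta([0,1])=1$, or more compactly by solving the fixed-point equation $(M_0+M_1+M_2)C=C$ subject to the normalization dictated by $[0,1)=S_0([0,1))\sqcup S_1([0,1))\sqcup S_2([0,1))$. The support statement $\mathrm{supp}(\mu_\beta)=[0,1]$ then follows from a standard IFS argument: the support is the attractor of $\{T_0,T_1\}$, and since $\beta<2$ the images $T_0([0,1])=[0,1/\beta]$ and $T_1([0,1])=[(\beta-1)/\beta,1]$ overlap and cover $[0,1]$, forcing the attractor to be the full interval.

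The main obstacle is the bookkeeping in paragraph three: for $M_2$ each row involves tracking up to $16$ branches of iterated self-similarity, and for each branch one must verify whether the resulting translation falls in $\{i_0,\dots,i_6\}$ or outside $[0,1]$. Although every single check reduces to the one cubic relation $\beta^3=2\beta^2-\beta+1$, executing all of them without error is the real work; the remaining ingredients (self-similarity, the base case, and the support claim) are then routine.
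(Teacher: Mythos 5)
Your plan follows essentially the same route as the paper for the identity \eqref{translatedcylinders}: the paper writes $\mu_\beta$ as the law of $(\beta-1)\sum_n\varepsilon_n\beta^{-n}$ and conditions on the first $1$, $2$, $4$ digits for $S_0,S_1,S_2$ (which is exactly your self-similarity identity iterated $\ell=1,2,4$ times), checks that the seven translation points form a closed system modulo escaping branches (the paper encodes this as a graph whose incidence matrices are $2M_0,4M_1,16M_2$), iterates to get the product formula with an unknown vector $X$, and then pins down $X=C$ from $(M_0+M_1+M_2)X=X$ plus a normalization (the paper uses that the first two entries sum to $\mu_\beta([0,2/\beta))=1$; your normalization clause is vaguer but serviceable). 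Two remarks. First, a small inaccuracy in your discard criterion: the escaping branches are not always disjoint from $[0,1]$ --- e.g.\ $\beta i_1=1$ and, for $S_1$ applied at $i_0=0$, the branch $\beta^2\cdot 0+1=1$ --- so the translate can contain the point $1$; you need in addition $\mu_\beta(\{1\})=0$ (immediate, since it is the probability that every digit equals $1$), which is precisely why the paper states its exclusion criterion as $\gamma\in(-\frac1\beta,1)$ together with $\mu_\beta(\{1\})=0$. Second, your support argument genuinely differs from the paper's: you invoke the standard fact that the support is the attractor of the IFS $\{T_0,T_1\}$ and that $[0,1]$ is invariant because $\beta<2$, whereas the paper covers $[0,1)$ by arbitrarily small intervals $i_j+\frac1\beta[\omega_1\dots\omega_n]_{\mathcal S}$, $j\in\{1,3,5\}$, of positive measure, using that positivity of the first, third and fifth entries is preserved under left multiplication by the $M_k$; your version is shorter and standard, the paper's is self-contained within its matrix formalism. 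Beyond this, as you acknowledge, the remaining content is the branch-by-branch bookkeeping (which the paper itself delegates to a figure), so the plan is sound.
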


\begin{proof}[{\bf Proof}]Denoting by $P$ the uniform probability-measure defined on $\{0,1\}^\mathbb N$ by $P([\varepsilon_1\dots\varepsilon_n])=\frac1{2^n}$ we have, for any interval $I\subset \mathbb R$,
$$
\mu_\beta(I)=P\Big(\Big\{(\varepsilon_n)_{n\in\mathbb N}\in\{0,1\}^\mathbb N\;;\;(\beta-1)\sum_{n=1}^\infty\varepsilon_n\beta^{-n}\in I\Big\}\Big),
$$
and the support of $\mu$ is included in $[0,1]$ because  $0\le(\beta-1)\sum_{n=1}^\infty\varepsilon_n\beta^{-n}\le1$. 

Let $k\in\{0,1,2\}$, we first determine a relation between $\mu_\beta\big(\gamma+\frac1\beta S_k(I)\big)$, where $\gamma\in\mathbb R$, and $\mu_\beta\big(\gamma'+\frac1\beta I\big)$, where $\gamma'$ belongs to a finite set depending on $\gamma$. For $k=0$, $\mu_\beta\big(\gamma+\frac1\beta S_0(I)\big)$ is the probability that $(\beta-1)\big(\frac{\varepsilon_1}\beta+\frac{\varepsilon_2}{\beta^2}+\dots\big)\in\gamma+\frac1{\beta^2}I$, and this is equivalent to $(\beta-1)\big(\frac{\varepsilon_2}\beta+\frac{\varepsilon_3}{\beta^2}+\dots\big)\in\gamma_0+\frac1\beta I\quad\text{with}\quad\gamma_0=\beta\gamma-(\beta-1)\varepsilon_1$~. Since \text{$\varepsilon_1=$ $0$ or $1$} with probability $\frac12$~,
\begin{equation}\label{firstcase}
\begin{array}{l}\mu_\beta\big(\gamma+\frac1\beta S_0(I)\big)=\frac12\sum_{\gamma_0\in\Gamma_0}\mu_\beta\big(\gamma_0+\frac1\beta I\big),\text{ where}\\
\Gamma_0:=\{\beta\gamma-(\beta-1)x\;;\;x\in\{0,1\}\}.\end{array}
\end{equation}
We proceed in the same way for $k=1$:
\begin{equation}\label{secondcase}
\begin{array}{l}\mu_\beta\big(\gamma+\frac1\beta S_1(I)\big)=\frac14\sum_{\gamma_1\in\Gamma_1}\mu_\beta\big(\gamma_1+\frac1\beta I\big),\text{ where}\\
\Gamma_1:=\left\{\beta^2\gamma+1-(\beta-1)(x\beta+y)\;;\;x,y\in\{0,1\}\right\},\end{array}
\end{equation}
and for $k=2$:
\begin{equation}\label{thirdcase}
\begin{array}{l}\mu_\beta\big(\gamma+\frac1\beta S_2(I)\big)=\frac1{16}\sum_{\gamma_2\in\Gamma_2}\mu_\beta\big(\gamma_2+\frac1\beta I\big),\text{ where}\\
\Gamma_2:=\left\{\beta^4\gamma+\beta^2+\beta-(\beta-1)(x\beta^3+y\beta^2+z\beta+t)\;;\;x,y,z,t\in\{0,1\}\right\}.\end{array}
\end{equation}
Since the support of $\mu_\beta$ is included in $[0,1]$ with $\mu_\beta(\{1\})=0$, we can restrict the sums in (\ref{firstcase}), (\ref{secondcase}) and (\ref{thirdcase}) to the indices $\gamma_i$ such that $\big(\gamma_i+\frac1\beta[0,1)\big)\cap[0,1)\ne\emptyset$, that is, $\gamma_i\in(-\frac1\beta,1)$. The relations ${\mathcal R}_0,{\mathcal R}_1,{\mathcal R}_2$ defined by $\gamma{\mathcal R}_i\gamma'\Leftrightarrow\gamma'\in\Gamma_i(\gamma)$ are represented below in the following way: each relation ${\mathcal R}_i$ is represented by the edges with label $i$, and the set of states is the set of the reals that can be reached, from the initial state $i_0=0$, by some path whose states are elements of $(-\frac1\beta,1)$.

\includegraphics[trim=0 0 0 0,scale=0.5]{Figure_3.jpg}

The incidence matrices of the graphs of ${\mathcal R}_0$, ${\mathcal R}_1$ and ${\mathcal R}_2$ are $2M_0,4M_1$ and $16M_2$ respectively, so we deduce from (\ref{firstcase}), (\ref{secondcase}) and (\ref{thirdcase}) that
\begin{equation}\label{translatedcylindersandX}
\left(\begin{smallmatrix}\mu_\beta\big(i_0+\frac1\beta S_{\omega_1}\circ\cdots\circ S_{\omega_n}([0,1))\big)\\\vdots\\\mu_\beta\big(i_6+\frac1\beta S_{\omega_1}\circ\cdots\circ S_{\omega_n}([0,1))\big)\end{smallmatrix}\right)=M_{\omega_1}\cdots M_{\omega_n}x,\text{where }x=\left(\begin{smallmatrix}\mu_\beta\big(i_0+[0,\frac1\beta)\big)\\\vdots\\\mu_\beta\big(i_6+[0,\frac1\beta)\big)\end{smallmatrix}\right).
\end{equation}
It remains to compute $x$. Note that $S_0([0,1))=[0,\frac1\beta)$, $S_1([0,1))=[\frac1\beta,\frac1\beta+\frac1{\beta^2})$ and $S_2([0,1))=[\frac1\beta+\frac1{\beta^2},1)$ form a partition of $[0,1)$. So (\ref{translatedcylindersandX}) (with $n=1$) implies
$$
x=(M_0+M_1+M_2)x.
$$
The sum of the two first entries of $x$ is $\mu_\beta\big([0,\frac2\beta)\big)=\mu_\beta([0,1])=1$, so we can compute the nonnegative eigenvector $x$ and obtain $x=c$.

To prove that $\mu_\beta$ has support $[0,1]$ it is sufficient to find a cover of $[0,1)$ by arbitrarily small intervals of positive measure. For this we choose an arbitrarily large integer $n$ and we consider the intervals $i_0+\frac1{\beta}[\omega_1\dots\omega_n]_{\mathcal S}$ and $i_4+\frac1{\beta}[\omega_1\dots\omega_n]_{\mathcal S}$. They have length less than $\frac1{\beta^n}$ by definition of the maps $S_k$~, and positive measure in consequence of (\ref{translatedcylinders}) because the set of the nonnegative columns-vectors with positive first, third and fifth entries is stable by left-multiplication by $M_k$~, $k\in\{0,1,2\}$. One has $[0,1)=\bigcup_kS_k([0,1))$, hence $[0,1)=\bigcup_{\omega_1\dots\omega_n}[\omega_1\dots\omega_n]_{\mathcal S}$ and consequently
$$\begin{array}{rcl}[0,1)&=&\big[0,\frac1\beta\big)\bigcup\big[1-\frac1\beta,1\big)\\&=&\displaystyle\Big(\bigcup_{\omega_1\dots\omega_n}\big(i_0+\frac1{\beta}[\omega_1\dots\omega_n]_{\mathcal S})\Big)\bigcup\Big(\bigcup_{\omega_1\dots\omega_n}\big(i_4+\frac1{\beta}[\omega_1\dots\omega_n]_{\mathcal S})\Big).\end{array}$$\end{proof}
 
\begin{theorem}\label{propertiesofmu}
The measure $\mu_\beta$ is weak-Gibbs with respect to the system of affine contractions defined in Lemma \ref{SiM} and to the potential defined on $\Omega=\{0,1,2\}^\mathbb N$ by $\Phi(\omega)=\log\Vert M_{\omega_1}c_{\sigma\omega}\Vert$~, where the map $\omega\mapsto c_{\omega}$ is defined in Theorem \ref{weak-Gibbs} and $M_0,M_1,M_2,c$ in Lemma \ref{SiM}. Consequently $\mu_\beta$ satisfies the multifractal formalism.
\end{theorem}

The proof is given in Appendix \ref{example}.

\section{Divergence of the normalized right-products of matrices}\label{limpoints}An immediate consequence of \cite{F}[Theorem 1.1] is that the sequence $n\mapsto\frac{P_n}{\Vert P_n\Vert}$ converges to
a rank one matrix if the matrices $\frac{A_n}{\Vert A_n\Vert}$ are positive and converge to a positive matrix $A$. Theorem \ref{aediv}(ii) below is a partial converse of this result: if the sequence $n\mapsto\frac{A_n}{\Vert A_n\Vert}$ has at least two limit-points $A,A'$ without common left-eigenvector, then $n\mapsto\frac{P_n}{\Vert P_n\Vert}$ diverges. For this we use an argument similar to the one of Elsner \& Friedland in \cite[Theorem~1]{EF97}.

To specify in what sense $n\mapsto\frac{P_n}{\Vert P_n\Vert}=\frac{A_1\cdots A_n}{\Vert A_1\cdots A_n\Vert}$ in general diverges, we use in Theorem \ref{aediv}(iii) the infinite product of a non-atomic Borel probability-measure $p$ with support~$\mathbb C$: the probability that the $(i,j)$-entry of $A_n$ belongs to a Borel set $B_{n,i,j}\subset\mathbb C$ for any $n,i,j$, is $\prod_{n,i,j}p(B_{n,i,j})$.

\begin{theorem}\label{aediv}Let $(A_n)_{n\in\mathbb N}$ be a sequence in the space $\mathcal M_d(\mathbb C)$ of the $d\times d$ matrices on $\mathbb C$, such that $\forall n\ P_n\ne0$.

\noindent(i) If the sequence $n\mapsto\frac{P_n}{\Vert P_n\Vert}$ converges, there exists a row-vector $r\ne0$ and a sequence of positive reals $(\lambda_n)_{n\in\mathbb N}$ such that $\lim_{n\to\infty}r\big(\frac{A_n}{\Vert A_n\Vert}-\lambda_nI_d\big)=0$. 

\noindent(ii) If the sequence $n\mapsto\frac{P_n}{\Vert P_n\Vert}$ converges, the limit-points of the sequence $n\mapsto\frac{A_n}{\Vert A_n\Vert}$ have at least one common left-eigenvector.

\noindent(iii) Let $p$ be a non-atomic Borel probability-measure with support $\mathbb C$ and $p^*$ the product-probability on $\mathcal M_d(\mathbb C)^\mathbb N$. For $p^*$-a.e. sequence $(A_n)_{n\in\mathbb N}\in\mathcal M_d(\mathbb C)^\mathbb N$ the sequence $n\mapsto\frac{P_n}{\Vert P_n\Vert}$ exists and diverges.\end{theorem}

\begin{proof}[{\bf Proof}] (i) The bounded sequence $n\mapsto\lambda_n:=\frac{\Vert P_n\Vert}{\Vert P_{n-1}\Vert\ \Vert A_n\Vert}$ satisfies $\lambda_n\Big(\frac{P_n}{\Vert P_n\Vert}-\frac{P_{n-1}}{\Vert P_{n-1}\Vert}\Big)=\frac{P_{n-1}}{\Vert P_{n-1}\Vert}\Big(\frac{A_n}{\Vert A_n\Vert}-\lambda_nI_d\Big)$ hence, if $n\mapsto\frac{P_n}{\Vert P_n\Vert}$ converges to a matrix $P$, $\lim_{n\to\infty}\frac{P_{n-1}}{\Vert P_{n-1}\Vert}\big(\frac{A_n}{\Vert A_n\Vert}-\lambda_nI_d\big)=0$ and $\lim_{n\to\infty}P\big(\frac{A_n}{\Vert A_n\Vert}-\lambda_nI_d\big)=0$. Since $P$ has norm $1$, it has at least one nonnull row $r$ and $\lim_{n\to\infty}r\Big(\frac{A_n}{\Vert A_n\Vert}-\lambda_nI_d\Big)=0$.

(ii) Using the second triangular inequality, $\lim_{n\to\infty}\big(\big\Vert r\frac{A_n}{\Vert A_n\Vert}\big\Vert-\lambda_n\Vert r\Vert\big)=0$. So for any limit-point $A=\lim_{k\to\infty}\frac{A_{n_k}}{\Vert A_{n_k}\Vert}$, the sequence $k\mapsto\lambda_{n_k}$ converges to $\lambda=\frac{\Vert rA\Vert}{\Vert r\Vert}$ and $rA=\lambda r$.

(iii) In view of (ii) we have to prove that the following sets have probability~$0$:
$$
\begin{array}{rcl}
\mathcal E_0&:=&\{(A_n)_{n\in\mathbb N}\;;\;\exists n,\ P_n=0\},\\
\mathcal E&:=&\big\{(A_n)_{n\in\mathbb N}\;;\;\text{the limit-points of }\frac{A_n}{\Vert A_n\Vert}\text{ have at least one common}\\&&\text{left-eigenvector}\big\}.\end{array}
$$$\mathcal E_0$ is a subset of $\{(A_n)_{n\in\mathbb N}\;;\;\exists n\ \det A_n=0\}$. It is sufficient to prove that the set $\mathcal E_d:=\{M\in\mathcal M_d(\mathbb C)\;;\;\det M=0\}$ has probability $0$. Suppose by induction hypothesis that $\mathcal E_{d-1}$ has probability $0$. Then the co-matrix $M'$ of a matrix $M\in\mathcal M_d(\mathbb C)$ has nonnull $(1,1)$-entry with probability $1$. If this entry is nonnull and $\det M=0$, then $m_{1,1}$ is a function of the other entries of~$M$, $m_{1,1}=f((m_{i',j'})_{(i',j')\ne(i,j)})$. Since $p$ is non-atomic, the conditional probability of $m_{1,1}=f((m_{i',j'})_{(i',j')\ne(i,j)})$ given $(m_{i',j'})_{(i',j')\ne(i,j)}$ is null, hence $\mathcal E_d$ has probability $0$, and $p^*(\mathcal E_0)=0$.

To prove that $p^*(\mathcal E)=0$ we consider the matrices$$T^{(0)}:=\frac2{d(d+1)}\left(\begin{smallmatrix}1&0&\dots&0\\1&1&\dots&0\\\vdots&\vdots&\ddots&\vdots\\1&1&\dots&1\end{smallmatrix}\right)\text{ and }T^{(1)}:=\frac2{d(d+1)}\left(\begin{smallmatrix}1&1&\dots&1\\0&1&\dots&1\\\vdots&\vdots&\ddots&\vdots\\0&0&\dots&1\end{smallmatrix}\right)$$that do not have a common left-eigenvector. We denote by $t^{(0)}_{i,j}$ and $t^{(1)}_{i,j}$ their entries respectively, and by $a^{(n)}_{i,j}$ the ones of $A_n$. Note that $\mathcal E$ is a subset of $\cup_k\cap_{\ell\ge k}\mathcal E_{k,\ell}$ with $\mathcal E_{k,\ell}:=\{(A_n)_{n\in\mathbb N}\;;\;\exists i,j\ \exists h\in\{0,1\}\ \vert a^{(2\ell-h)}_{i,j}-t^{(h)}_{i,j}\vert\ge\frac1k\}$ because, if $(A_n)_{n\in\mathbb N}$ is in the complementary of $\cup_k\cap_{\ell\ge k}\mathcal E_{k,\ell}$, the sequence $n\mapsto\frac{A_n}{\Vert A_n\Vert}$ has limit-points $T^{(0)}$ and $T^{(1)}$ without common left-eigenvector. The set $\{(A_n)_{n\in\mathbb N}\;;\;\vert a^{(2\ell-h)}_{i,j}-t^{(h)}_{i,j}\vert\ge\frac1k\}$ has probability $\le1-\beta$ with $\beta>0$, $\beta$~independent of $h,i,j$, hence $\mathcal E_{k,\ell}$ has probability $\le1-\beta^{2d^2}$ and consequently $p^*(\cap_{\ell\ge k}\mathcal E_{k,\ell})=0$, $p^*(\mathcal E)=0$.\end{proof}

\begin{corollary}\label{nec}Let $(A_n)_{n\in\mathbb N}$ be a non eventually constant sequence of matrices that belong to a finite subset of $\mathcal M_d(\mathbb C)$, say $\mathcal M=\{M_0,\dots,M_{b-1}\}$, and suppose that, for any distinct $i$ and $j$, the matrices $M_i$ and $M_j$ do not have a common left-eigenvector. Then, assuming that the matrix $P_n=A_1\cdots A_n$ is nonnull for any $n$, the sequence $n\mapsto\frac{P_n}{\Vert P_n\Vert}$ diverges.
\end{corollary}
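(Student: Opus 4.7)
The plan is to reduce directly to Proposition~\ref{aediv}(ii). Assume, for contradiction, that $(A_n)_{n\in\mathbb N}$ is not eventually constant. Since $\mathcal M$ is finite, the pigeonhole principle guarantees the existence of two distinct indices $i\ne j$ in $\{0,\dots,b-1\}$ such that the set $\{n\ ;\ A_n=M_i\}$ and the set $\{n\ ;\ A_n=M_j\}$ are both infinite. Consequently the sequence of normalized matrices $n\mapsto A_n/\Vert A_n\Vert$ admits (at least) the two distinct limit-points $M_i/\Vert M_i\Vert$ and $M_j/\Vert M_j\Vert$, and in particular diverges.

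Next I would observe that a nonzero row-vector $L$ is a left-eigenvector of $M/\Vert M\Vert$ if and only if it is a left-eigenvector of $M$, because the two matrices are proportional. Hence by hypothesis the matrices $M_i/\Vert M_i\Vert$ and $M_j/\Vert M_j\Vert$ share no common left-eigenvector; a fortiori, the whole set of limit-points of $n\mapsto A_n/\Vert A_n\Vert$ -- which is a subset of $\{M_0/\Vert M_0\Vert,\dots,M_{b-1}/\Vert M_{b-1}\Vert\}$ containing these two elements -- has no common left-eigenvector.

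Both hypotheses of Proposition~\ref{aediv}(ii) are therefore met (the assumption $P_n\ne0$ for all $n$ is given), and we conclude that $n\mapsto P_n/\Vert P_n\Vert$ diverges. This is a direct application and no real obstacle appears; the only point worth emphasizing is the passage from ``any two distinct $M_i,M_j$ share no common left-eigenvector'' to ``the full set of limit-points of $n\mapsto A_n/\Vert A_n\Vert$ shares no common left-eigenvector'', which is immediate since a common left-eigenvector of the whole set would in particular be a common left-eigenvector of any chosen pair.
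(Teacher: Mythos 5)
Your proposal is correct and follows essentially the same route as the paper: observe that a non–eventually-constant sequence in the finite set $\mathcal M$ has at least two limit-points $M_i\ne M_j$, note that the hypothesis rules out a common left-eigenvector among the limit-points (your remark that normalization does not change left-eigenvectors just makes explicit what the paper leaves implicit), and conclude by Proposition~\ref{aediv}(ii). Nothing is missing.
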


\begin{proof}[{\bf Proof}] The hypotheses imply that the sequence $n\mapsto\frac{A_n}{\Vert A_n\Vert}_n$ is not eventually constant. It admits at least two distinct limit-points $\frac{M_i}{\Vert M_i\Vert}$ and $\frac{M_j}{\Vert M_j\Vert}$ and, by Theorem \ref{aediv}(ii), $n\mapsto\frac{P_n}{\Vert P_n\Vert}$ diverges.
\end{proof}

\begin{remark}If we suppose, in addition of the hypotheses of Corollary \ref{nec}, that the matrices $M_i$ are positive, the sequence $n\mapsto\frac{P_nv}{\Vert P_nv\Vert}$ converges to a limit independent of the positive vector $v$ because the Birkhoff contraction coefficient of $P_n$ tends to $0$, see \cite[\S2]{Har02} or \cite[\S3]{Sen81}.\end{remark}

\begin{remark}Let $(A_n)_{n\in\mathbb N}$ be a non eventually constant sequence of matrices, that belong to the set $\{M_0,M_1,M_2\}$ defined in Lemma \ref{SiM}. Assuming that the sets $\{n\;;\;A_n=M_0\}$ and $\{n\;;\;A_n\in\{M_1,M_2\}\}$ are infinite, we prove that $n\mapsto\frac{P_n}{\Vert P_n\Vert}$ diverges. Indeed $M_1$ has left-eigenvectors $(0,0,0,0,0,a,b)$ for any $(a,b)\in\mathbb C^2\setminus\{(0,0)\}$, and approximately $(1.22,0,1.49,1.82,1,0,0)$ and $(-0.72,0,0.52,-0.38,1,0,0)$, $M_2$ has left-eigenvectors $(a,b,-a,0,-a,c,d)$ for $(a,b,c,d)\in\mathbb C^4\setminus\{(0,0,0,0)\}$, and $(a,0,0,-a,b,0,a)$ for $(a,b)\in\mathbb C^2\setminus\{(0,0)\}$ and, since no of them is eigenvector of $M_0$, $n\mapsto\frac{P_n}{\Vert P_n\Vert}$ diverges by Theorem \ref{aediv}(ii). On the other hand $n\mapsto\frac{P_nv}{\Vert P_nv\Vert}$ converges by Lemma \ref{thnec}.
\end{remark}

\appendix
\section{Proof of Theorem \ref{propertiesofmu}}\label{example}

\subsection{General remarks about Theorem \ref{rankone}(ii)}

To get an idea of the limit-matrices $P^{(s)}$, we need to estimate the ratios $\frac{\Vert P_nu_j\Vert}{\Vert P_nu_{j'}\Vert}$ and the ratios between entries with same column-index. For this we define two coefficients $\Lambda(A),\lambda(A)$ in such a way to obtain a simple formula for $\Lambda(P_n)$ (Lemma~\ref{lL}).

\begin{definition}\label{lambda}Let $\Lambda(A):=\max\frac{\Vert Au_j\Vert}{a_{i,j}}$ ($(i,j)$ such that $a_{i,j}\ne0$) and $\lambda(A):=\max\frac{\Vert Au_k\Vert}{a_{i,j}}$ ($(i,j,k)$ such that $a_{i,j}\ne a_{i,k}=0$), or $0$ if $\not\exists (i,j,k)$.\end{definition}\begin{remark}\label{normofproduct}For any $A$ and $B$ nonnegative, $\Vert AB\Vert\ge\Vert A\Vert\Vert B\Vert/\Lambda(B)$ because $\Vert A\Vert\Vert B\Vert=\sum_{i,j,j',k}a_{i,j}b_{j',k}$ and $\sum_{j'}b_{j',k}\le\Lambda(B)b_{j,k}$ if $b_{j,k}\ne0$.\end{remark}\begin{lemma}\label{lL}$\forall A_1,\dots,A_n$ nonnegative, $\displaystyle\Lambda(P_n)\le\sum_{k=1}^n\Lambda(A_k)\prod_{1\le i<k}\lambda(A_i)$.\end{lemma}

\begin{proof}[{\bf Proof}]It is sufficient to prove this inequality for $P=AB$, product of two nonnegative matrices. Suppose $p_{i_0,j_0}\ne0$ or, equivalently, $\exists j_1\ a_{i_0,j_1}b_{j_1,j_0}\ne0$.$$\begin{array}{rcl}
\Vert Pu_{j_0}\Vert&=&\sum\nolimits_{a_{i_0,j}\ne0}\Vert Au_j\Vert b_{j,j_0}+
\sum_{a_{i_0,j}=0}\Vert Au_j\Vert b_{j,j_0}\\
&\le&\Lambda(A)\sum_ja_{i_0,j}b_{j,j_0}+
\lambda(A)a_{i_0,j_1}\Vert Bu_{j_0}\Vert\\
&\le&\Lambda(A)p_{i_0,j_0}+\lambda(A)a_{i_0,j_1}\Lambda(B)b_{j_1,j_0}\\
&\le&\Lambda(A)p_{i_0,j_0}+\lambda(M)\Lambda(N)p_{i_0,j_0},\end{array}$$proving that $\Lambda(P)\le\Lambda(A)+\lambda(A)\Lambda(B)$ (or $\Lambda(P)\le\Lambda(A)$ if $\lambda(A)=0$).\end{proof}

\begin{remark}$\Lambda(A)$ is related to $\Lambda'(A):=\max_{i,i',j\atop a_{i,j}\ne0}\frac{a_{i',j}}{a_{i,j}}$ by the inequalities $\Lambda'(A)\le\Lambda(A)\le\Lambda'(A)\cdot d$, and $\Lambda'(A)$ is related to the Hausdorff distance $d_H$ defined in \cite[\S3]{Sen81}: $\log\Lambda'(A)=\max_j d_H(Au_j,u)$. Another formula for $P_n$ is $\max_{j,j'}d_H(P_nu_j,P_nu_{j'})\le\tau_B(A_1)\cdots\tau_B(A_{n-1})\max_{j,j'}d_H(A_nu_j,A_nu_{j'})$, where $\tau_B(A)\le1$ for any nonnegative matrix $A$, $\tau_B(A)<1$ if $A$ is positive.\end{remark}

\begin{proposition}\label{cd1}Any sequence of nonnegative $d\times d$ matrices $(A_n)_{n\in\mathbb N}$ and any limit-point $\lim_{k\to\infty}\frac{P_{n_k}}{\Vert P_{n_k}\Vert}$ such that $\displaystyle\sup_{k\ge1,\ n_{k+1}\le n<n_{k+2}}\Lambda(P_{n_k,n})<\infty$ satisfy the first condition of Theorem \ref{rankone}(ii).\end{proposition}\begin{proof}[{\bf Proof}]We use the inequality of Remark \ref{normofproduct} with $A=P^{(s)}$ and $B=P_{n_k,n}$, $n_{k+1}\le n<n_{k+2}$.\end{proof}

Since the uniform convergence is required in Theorem \ref{weak-Gibbs}, we will need the following general result habitually used for similar problems.\begin{proposition}\label{pointwisuniform}Given a set of matrices $\mathcal M=\{M_0,\dots,M_{b-1}\}$, a column-vector $c$ and $\Omega:=\{0,\dots,b-1\}^\mathbb N$, the sequence of column-vectors $n\mapsto c_{\omega,n}=\frac{M_{\omega_1}\cdots M_{\omega_n}c}{\Vert M_{\omega_1}\cdots M_{\omega_n}c\Vert}$ converges uniformly over $\Omega$ if and only if the following pointwise convergence holds for any $\omega\in\Omega$\begin{equation}\label{modifunif}\lim_{n\to\infty}\big(\sup\{\Vert c_{\xi,r}-c_{\xi,s}\Vert\;;\;\xi\in[\omega_1\dots\omega_n],r\ge n,s\ge n\}\big)=0.\end{equation}\end{proposition}\begin{proof}[{\bf Proof}]The direct implication is given by the Cauchy criterion. Conversely, suppose that (\ref{modifunif})~holds for any $\omega\in\Omega$. Given $\varepsilon>0$ and $\omega\in\Omega$ there exists a rank $n(\omega)$ such that$$\xi\in[\omega_1\dots\omega_{n(\omega)}],r\ge n(\omega),s\ge n(\omega)\ \Rightarrow\ \left\Vert c_{\xi,r}-c_{\xi,s}\right\Vert\le\varepsilon.$$Each cylinder set $[\omega_1\dots\omega_{n(\omega)}]$ is an open set containing $\omega$. Because $\Omega$ is compact, it is covered by finitely many of such sets, say $[\omega^{i}_1\dots\omega^{i}_{n(\omega^i)}]$ for $i=1,\dots,N$. Let $\displaystyle n=\max_{i\in\{1,\dots,N\}}n(\omega^i)$. The inequality $\left\Vert c_{\xi,r}-c_{\xi,s}\right\Vert\le\varepsilon$ holds for any $\omega\in\Omega,\xi\in[\omega_1\dots\omega_n],r\ge n,s\ge n$, proving that the sequence $n\mapsto c_{\omega,n}$ is uniformly Cauchy and converges uniformly on $\Omega$.\end{proof}

\subsection{Proof of Theorem \ref{propertiesofmu} by means of Theorem \ref{weak-Gibbs}}

The probability measure, defined on $\{0,1,2\}^\mathbb N$ by$$
\nu_\beta([\omega_1\dots\omega_n]):=\mu_\beta([\omega_1\dots\omega_n]_\mathcal S),
$$
is sofic because one has$$[\omega_1\dots\omega_n]_\mathcal S=\left\{\begin{array}{ll}i_0+\frac1\beta[\omega_2\dots\omega_n]_\mathcal S&\text{if }\omega_1=0\\i_1+\frac1\beta[0\omega_2\dots\omega_n]_\mathcal S&\text{if }\omega_1=1\\i_1+\frac1\beta[10\omega_2\dots\omega_n]_\mathcal S&\text{if }\omega_1=2\end{array}\right.$$and, with Lemma \ref{SiM},$$\nu_\beta\big([\omega_1\dots\omega_n]\big)=\left\{\begin{array}{ll}{^tu_1}M_{\omega_2}\cdots M_{\omega_n}c&\text{if }\omega_1=0\\\frac12\ {^tu_3}M_{\omega_2}\cdots M_{\omega_n}c&\text{if }\omega_1=1\\\frac18\ {^tu_5}M_{\omega_2}\cdots M_{\omega_n}c&\text{if }\omega_1=2.\end{array}\right.$$So it is sufficient to to check that $u_1,u_3,u_5,c$ and the $M_k$ satisfy (\ref{modifunif}), (\ref{randc}),~(\ref{Mandc}).\begin{remark}By the second part of the proof of \cite[Theorem 7]{T2}, $\nu_\beta$ is the image by the morphism $(\epsilon_n)_{n\in\mathbb N}\mapsto(\lfloor\frac{\epsilon_n}7\rfloor)_{n\in\mathbb N}$ of the Markov probability measure on $\{0,\dots,20\}^\mathbb N$ with initial probabilities

$\left(\begin{smallmatrix}\frac35&0&0&0&0&0&0&0&0&\frac{13}{40}&0&0&0&0&0&0&0&0&\frac3{40}&0&0\end{smallmatrix}\right)$ and transition matrix$$P:=\left(\begin{smallmatrix}M\\M\\M\end{smallmatrix}\right)\text{ with }M:=\left(\begin{smallmatrix}\frac12&0&0&0&0&0&0&0&0&\frac{13}{48}&\frac1{12}&0&0&0&\frac1{16}&0&0&0&\frac1{16}&0&\frac1{48}\\0&0&\frac{13}{16}&0&0&0&0&0&0&0&0&0&\frac3{16}&0&0&0&0&0&0&0&0\\0&0&0&\frac2{13}&\frac6{13}&0&0&0&0&0&\frac1{13}&\frac3{13}&0&0&\frac3{52}&0&0&0&0&0&\frac1{52}\\0&0&0&0&0&0&0&\frac34&0&0&0&0&0&0&0&0&0&\frac1{16}&\frac3{16}&0&0\\\frac12&0&0&0&0&0&\frac16&0&0&\frac{13}{48}&0&0&0&0&0&0&0&0&\frac1{16}&0&0\\0&0&0&0&1&0&0&0&0&0&0&0&0&0&0&0&0&0&0&0&0\\0&1&0&0&0&0&0&0&0&0&0&0&0&0&0&0&0&0&0&0&0\end{smallmatrix}\right).$$\end{remark}

We describe $\mathcal I(M_{\omega_1}\cdots M_{\omega_n})$ in the following Lemma (note that in much cases this set does not have the form $I\times J$, for instance when $\omega_1\omega_2\dots=(1100000)^{i_1}(1012)^{i_2}(1100000)^{i_3}(1012)^{i_4}\dots$ with $i_1,i_2,\dots\ge1$). Let

$A:=2M_0$, $B:=4M_1$, $C:=16M_2$ (matrices with integral entries)

$\mathcal C_1:=\{v\;;\;v\ge u_{135}\}$, that is, $\{v\;;\;v-(u_1+u_3+u_5)\text{ nonnegative}\}$

$\mathcal C_2:=\{u_j\ {\scriptstyle(j\le7)},u_{125},u_{13},u_{5},u_{23},u_{236},u_{27},u_{34},u_{356},u_{36},u_{57},u_{145},u_{134},u_{1334}\}$ $\mathcal E:=\{ABA,B^2A,CBA,CA\}$

$\mathcal E':=\{A^2C,A^2BAC,BABAC,CABAC,B^2AC,CBAC,CAC,BC\}$

$\mathcal S=\{EA^\alpha B^\beta\;;\;E\in\mathcal E,\ \alpha\ge0,\ \beta\ge0\}\cup\{E'C^\alpha B^\beta\;;\;E'\in\mathcal E',\ \alpha\ge0,\ \beta\ge0\}$.

We use the stability of $\mathcal C_1$ and $\mathcal C_1\cup\mathcal C_2$ by left-multiplication by $A$, $B$ or $C$. We eventually consider the elements of $\mathcal S$ as words on the alphabet $\{A,B,C\}$.

\begin{lemma}\label{mathcalI}(i) For any sequence $(A_n)_{n\in\mathbb N}\in\{A,B,C\}^\mathbb N$ there exist

$s_0:=0\le  s_1<s_2<\dots$ such that $S_k:=P_{s_k,s_{k+1}}\in\mathcal S$, except $S_0$, empty or not empty, strict suffix of an element of $\mathcal S$. For any $n$,
$$P_n=S_0\cdots S_{k(n)}T_n,\ T_n\text{ strict prefix of }S_{k(n)+1},$$specifying that, if $(A_n)_{n\in\mathbb N}$ is eventually constant, the sequence $s_0<s_1<\dots<s_\ell$ is finite and, for $n$ large enough, $P_n=S_0\cdots S_\ell T_n$ with 
$T_n\in\mathcal S$.

(ii) For any $S_1,\dots,S_k\in\mathcal S$ with $k\ge3$, $\mathcal I(S_1\cdots S_k)=(I\times J)\cup(I'\times J')$ with $\{1,3,5\}\cup I'\subset I$ and $J\cap\{1,3,5\}\ne\emptyset$.

(iii) There exist $\Lambda<\infty$, upper bound of $\Lambda(S_1\cdots S_{k+1})$ for $S_1,\dots,S_k\in\mathcal S$ and $S_{k+1}$ prefix empty or not empty of an element of $\mathcal S$.\end{lemma}




\begin{proof}[{\bf Proof}](i) For $n$ large enough there exists $m$ such that $A_m\dots A_n\in\mathcal S$. So $A_1\dots A_n=S_0(n)S_1(n)\dots S_{k(n)}(n)$ where $S_k(n)\in\mathcal S$ for $k\ge1$ and the word $S_0(n)$ is a strict suffix of a word of $\mathcal S$. The word $S_0(n)$ takes the same value $S_0$ for $n\in I_0$ (infinite set). There exists an infinite set $I_1\subset I_0$ such that the couple $(S_0(n),S_1(n))$ takes the same value $(S_0,S_1)$ for $n\in I_1$. And so one, finally $A_1A_2\dots=S_0S_1S_2\dots$.

(ii) We compute the matrices of $\mathcal E\cup\mathcal E'$ and we see that their first or third or fifth column belongs to $\mathcal C_1$. When two distinct columns belong to $\mathcal C_2$, they are $u_2$ and $u_{23}$, or $u_{15}$ and $u_{34}$, or $u_{13}$ and $u_{145}$. Both properties remain true for the matrices $S\in\mathcal S$, because the stability of $\mathcal C_1$ by left-multiplication by any matrix $M\in\{A,B,C\}$ implies \hbox{$\forall j\in\{1,3,5\}\ \exists j'\in\{1,3,5\}\ Mu_{j'}=u_j$}, and because the columns of $S$ are nonnegative linear combinations of the columns of one matrix of $\mathcal E\cup\mathcal E'$.

To ensure that $S_{k-1}S_k$ does not have two distinct columns that belong to~$\mathcal C_2$, it is sufficient to check that the following couples of vectors are equal when both belong to $\mathcal C_2$: $S_{k-1}u_2=S_{k-1}u_{23}$ and $S_{k-1}u_{15}=S_{k-1}u_{34}$ and $S_{k-1}u_{13}=S_{k-1}u_{145}$. For instance suppose $S_{k-1}=ABAA^\alpha B^\beta$; then we have $S_{k-1}u_2=0\not\in\mathcal C_2$ in both cases $\beta\ge1$ and $\beta=0$ with $\alpha\in\{0,3\}$ mod.$4$; we have $S_{k-1}u_2=S_{k-1}u_{23}$ in the case $\beta=0$ with $\alpha\equiv1$ mod.$4$; we have $S_{k-1}u_2\in\mathcal C_1$ in the case $\beta=0$ with $\alpha\equiv2$ mod.$4$. So there exist two sets $J$ (containing $1$ or $3$ or $5$) and $J'$ such that $S_{k-1}S_ku_j\in\mathcal C_1$ for $j\in J$, and $S_{k-1}S_ku_{j'}\in\mathcal C_2$ is independent of \hbox{$j'\in J'$}, as well as $Pu_{j'}$. 

Now we use the following ''synchronizing property'': if $M\in\{A^2,BA,B^2,C\}$ and $(u,v,w)\in\mathcal C_1\times \mathcal C_2\times \mathcal C_2$, then $I(Mu)\subset\mathcal I(Mv)=\mathcal I(Mw)$. The elements of~$\mathcal E\cup\mathcal E'$ have at least one factor $A^2$, $BA$, $B^2$ or~$C$ hence they have this property. Consequently if $k\ge3$ the set $\mathcal I(Pu_j)$ does not depend on $j\in J$.

(iii) We use the formulas $A^{4n}=\left(\begin{smallmatrix}1&0&0&0&0&0&0\\n&0&1&0&0&0&0\\n&0&0&1&1&0&0\\0&0&0&0&0&0&0\\n&0&0&0&0&0&1\\n&0&0&0&1&0&0\\n&1&0&0&0&0&0\end{smallmatrix}\right)$ and $C^n=\left(\begin{smallmatrix}1&0&0&0&n&0&1\\0&0&0&0&0&0&0\\1&0&0&0&n-1&0&1\\0&0&0&1&n&0&0\\0&0&0&0&1&0&0\\0&0&0&0&0&0&0\\0&0&0&0&0&0&0\end{smallmatrix}\right)$. The matrices of $\mathcal S$ are products of at most three matrices of the set $\mathcal X:=\{A,A^2,A^3,EA^{4n},B^n,E'C^n\}_{E\in\mathcal E,E'\in\mathcal E',n\ge0}$. Since $\max_{X\in\mathcal X}\Lambda(X)=12$ and $\max_{X\in\mathcal X}\lambda(X)=\frac52$, Lemma \ref{lL} gives $\Lambda(S_1\cdots S_k)\le\sum_{i=0}^{3k-1}12\cdot(\frac52)^i=:C_k$.

To obtain a better bound we first prove that $\lambda(S_1\cdots S_9)<1$. We have $S_1=E_1X^\alpha B^\beta$ with $(E_1,X)\in\mathcal E\times\{A\}$ or $\mathcal E'\times\{C\}$ and, for $j\in J'$, either the vector $A^\alpha B^\beta S_2\cdots S_9u_{j'}$ belongs to~$\mathcal C_1$ and in this case the ''synchronizing property'' implies that $\mathcal I(S_1\cdots S_9u_j)$ does not depend on $j\in J\cup J'$, and $\lambda(S_1\cdots S_9)=0$, either it belongs to $\mathcal C_2$ and $S_1\cdots S_9u_{j'}$, image of a vector of $\mathcal C_2$ by a matrix of $\mathcal E\cup\mathcal E'$, has norm at most $7$.

Let now $j\in J$. By definition of $J$, $S_8S_9u_j\ge u_{135}$. Let us find a lower bound for $S_6S_7u_{135}$. We write $S_6=E_6Y^{\alpha'}B^{\beta'}$ and $S_7=E_7Z^{\alpha''}B^{\beta''}$, $(E_6,Y)$ and $(E_7,Z)\in\mathcal E\times\{A\}$ or $\mathcal E'\times\{C\}$. Let $v:=E_7u_{135}$, we have $S_6S_7u_{135}\ge S_6v$. If $E_7=ABA$ then $v=u_{1233567}$; if $E_7=A^2C$ or $A^2BAC$ then $v\ge2u_{135}$; and in the other cases $v\ge u_{11345}$.

We want to prove the inequality $S_6v_k\ge2u_{135}$. The images of $u_{1233567}$ and $u_{11345}$ by $B$ or $C$ being $\ge u_{11345}$, it remains to check that $E_6A^{\alpha'}u_{1233567}\ge2u_{135}$ and $E_6A^{\alpha'}u_{11345}\ge2u_{135}$, and only for $\alpha'\in\{0,1,2,3\}$ because \hbox{$A^5\ge A$}. We see that both inequalities are true except in the case $E_6\in\{ABA,CBA\}$ with $\alpha'=0$ and $v=u_{1233567}$. This value of $v$ corresponds to $\beta'=0$ and $E_7=ABA$, hence in this case $BA^2BA$ is a factor of $S_6S_7$, in which case $\lambda(S_1\cdots S_9)=0$. Assuming that $\lambda(S_1\cdots S_9)\ne0$, we have proved that $S_6S_7u_{135}\ge2u_{135}$ and similarly $S_4S_5u_{135}\ge2u_{135}$, $S_2S_3u_{135}\ge2u_{135}$, so $S_2\cdots S_7u_{135}\ge8u_{135}$. According to the definition of $J$, $S_2\cdots S_9u_j\ge2^{\lfloor\frac {k-3}2\rfloor}u_{135}$ for any $j\in J$. The image of the l.h.s. by $S_1$ is $Pu_j$, and the image of the r.h.s. by $S_1$ has entries either $0$ or at least $8$ but, by the ''synchronizing property'', the location of the nonnull entries in both vectors is the same, so the nonnull entries of $S_1\cdots S_9u_j$ are at least $8$.

By Lemma \ref{lL}, $\Lambda(S_1\cdots S_k)=\Lambda((S_1\cdots S_9)(S_{10}\cdots S_{18})\cdots)\le C_9\sum_0^\infty(\frac78)^k$. This is also true if $k<9$. Using still Lemma \ref{lL}, $\Lambda(S_1\cdots S_{k+1})$ is bounded because if $S_{k+1}\not\in\mathcal S$, it is the product of at most four matrices $A$, $B$ or~$C$.\end{proof}

\begin{lemma}\label{thnec}If the sequence of nonnegative matrices $\mathcal A=(A_n)_{n\in\mathbb N}$ is not eventually constant, the sequence $n\mapsto\frac{P_nv}{\Vert P_nv\Vert}$ converges uniformly to a constant $v_\mathcal A$ on the set $\mathcal V_\alpha:=\{v\;;\;\Vert v\Vert=1,v_1\ge\alpha,v_3\ge\alpha,v_5\ge\alpha\}$ ($0<\alpha\le\frac13$).\end{lemma}

\begin{proof}[{\bf Proof}]Let $(s_k)_{k\in\mathbb N}$ be the sequence involved in Lemma \ref{mathcalI}(i), and $(n_k)_{k\in\mathbb N}$ a subsequence of $(s_k)_{k\in\mathbb N}$ such that $k\mapsto\frac{P_{n_k}}{\Vert P_{n_k}\Vert}$ converges. To apply Proposition \ref{cd1} we must check that \hbox{$\displaystyle\sup_{k\ge1,\ n_{k+1}\le n<n_{k+2}}\Lambda(P_{n_k,n})<\infty$}. This follows from Lemma \ref{mathcalI}(iii) because there exists $h$ such that $n_k=s_h$. So the first condition of Theorem \ref{rankone}(ii) is satisfied, let us check the second. The matrix $P_{n_k,n_{k+1}}$ is a product $S_h\cdots S_{h'}$ and we can chose the sequence $(n_k)_{k\in\mathbb N}$ for $h'-h$ to be at least $2$, consequently Lemma \ref{mathcalI}(ii) applies to $P_{n_k,n_{k+1}}$ and $\mathcal I(P_{n_k,n_{k+1}})$ has the form $(I_k\times J_k)\cup(I'_k\times J'_k)$ with $I'_k\subset I_k$. By Lemma \ref{mathcalI}(iii) the ratio between any couple of entries of $P_{n_k,n_{k+1}}$ with same column-index has a positive lower bound, hence the set $\mathcal I(Q)$ (when $Q$ is a limit-point of $k\mapsto\frac{P_{n_k,n_{k+1}}}{\Vert P_{n_k,n_{k+1}}\Vert}$) has the form $(I\times J)\cup(I'\times J')$ with $I=I_k,I'=I'_k,J\subset J_k,J'\subset J'_k$ for $k$ large enough. Since $I'\subset I$, the second condition of Theorem \ref{rankone}(ii) is satisfied.

Let $v=\alpha u_{135}+w$ be an element of $\mathcal V_\alpha$. It has coordinates $\frac{w_i}{1-3\alpha}$ in the basis $e_i:=\alpha u_{135}+(1-3\alpha)u_i$. Since $\frac{P_ne_i}{\Vert P_ne_i\Vert}$ converges to a vector $v_\mathcal A$, one has $\Vert P_ne_i-\Vert P_ne_i\Vert v_\mathcal A\Vert\le\varepsilon_n\Vert P_ne_i\Vert$ with $\lim_{n\to\infty}\varepsilon_n=0$. By the triangular inequality, $\Vert P_nv-\Vert P_nv\Vert v_\mathcal A\Vert\le\varepsilon_n\Vert P_nv\Vert$ and $\frac{P_nv}{\Vert P_nv\Vert}$ converges uniformly.\end{proof}

\begin{proof}[{\bf End of the proof of Theorem \ref{propertiesofmu}}]We prove that (\ref{modifunif}) holds for the matrices $M_0,M_1,M_2$ and the vector $c$ defined in Lemma \ref{SiM}. Given $\varepsilon>0$, let us find an integer $n_\varepsilon$ for $\Vert c_{\xi,r}-c_{\xi,s}\Vert\le\varepsilon$ to hold when $\xi\in[\omega_1\dots\omega_{n_\varepsilon}]$ and $r,s\ge n_\varepsilon$. In the sequel $A_n$ is the matrix $2^{2^{\xi_n}}M_{\xi_n}$, that belongs to $\{A,B,C\}$.

We suppose first that $\omega$ is not eventually constant and we apply Lemma~\ref{thnec}:\begin{equation}\label{unifomega}\textstyle\exists n'_\varepsilon\ \forall n\ge n'_\varepsilon\ \forall v\in\mathcal V_{\frac1\Lambda}\ \big\Vert\frac{P_nv}{\Vert P_nv\Vert}-v_\mathcal A\big\Vert\le\frac\varepsilon2.\end{equation}Choosing $n_\varepsilon$ large enough, there exist four occurrences of $0^h$ or $2^h$ in the word $\omega_{n'_\varepsilon+1}\dots\omega_{n_\varepsilon}$ seen as a concatenation of words $0^h$, $1^h$ and $2^h$ and consequently, by definition of the sequence $(s_k)_{k\in\mathbb N}$ involved in Lemma \ref{mathcalI}(i), there exists $k$ such that $n'_\varepsilon\le s_k\le n_\varepsilon<s_{k+1}$. For any $r\ge n_\varepsilon$, $\Lambda(P_{s_k,r})\le\Lambda$ by Lemma~\ref{mathcalI}(iii), and the inequality $\Vert c_{\xi,r}-c_{\xi,s}\Vert\le\varepsilon$ follows because, if we replace in (\ref{unifomega}) the integer $n$ by $s_k$ and $v$ by $\frac{P_{s_k,r}c}{\Vert P_{s_k,r}c\Vert}$ (element of $\mathcal V_{\frac1\Lambda}$), we obtain $\forall r\ge n_\varepsilon\ \Vert c_{\xi,r}-v_\mathcal A\Vert\le\frac\varepsilon2$.

Suppose now that $\omega$ is eventually constant. In other words, there exists $M\in\{A,B,C\}$ and $n_\omega$ such that $P_rc=P_{n_\omega}M^{n_\varepsilon-n_\omega}P_{n_\varepsilon,r}c$ if $n_\varepsilon$ is large enough.

We use again Lemma \ref{mathcalI}(i) and we have$$P_rc=P_{n_\omega}M^nP_{s_k,r}c$$where $k$ is the smallest integer such that $s_k\ge n_\varepsilon-3$, and $n=s_k-n_\omega$.

By the classical formula about the powers of $M$, there exists $(R_n)_{n\in\mathbb N}$ with limit $0$ such that $M^n=\varphi(n)(vw+R_n)$, where $\varphi(n)$ has the form $n^\alpha\rho^n$ and $v$~(resp. $w$) is a suitable right-eigenvector (resp. left-eigenvector) of $M$. Using the usual inequality $\big\Vert\frac x{\Vert x\Vert}-\frac y{\Vert y\Vert}\big\Vert\le\frac{2\Vert x-y\Vert}{\max(\Vert x\Vert,\Vert y\Vert)}$ we deduce$$\textstyle\big\Vert\frac{P_rc}{\Vert P_rc\Vert}-\frac{P_{n_\omega}\varphi(n)vwP_{s_k,r}c}{\Vert P_{n_\omega}\varphi(n)vwP_{s_k,r}c\Vert}\big\Vert\le2\frac{\Vert P_{n_\omega}\varphi(n)R_n P_{s_k,r}c\Vert}{\Vert P_{n_\omega}\varphi(n)vwP_{s_k,r}c\Vert}.$$After simplification this is equivalent to$$\textstyle\big\Vert\frac{P_rc}{\Vert P_rc\Vert}-\frac{P_{n_\omega}v}{\Vert P_{n_\omega}v\Vert}\big\Vert\le2\frac{\Vert P_{n_\omega}R_n P_{s_k,r}c\Vert}{\Vert P_{n_\omega}v\Vert\Vert wP_{s_k,r}c\Vert}.$$We bound the r.h.s. by $2\frac{\Vert P_{n_\omega}\Vert}{\Vert P_{n_\omega}v\Vert}\Vert R_n\Vert\frac{\Vert P_{s_k,r}c\Vert}{\Vert wP_{s_k,r}c\Vert}$, where $\Vert R_n\Vert$ tends to $0$ and, by Remark \ref{normofproduct}, $\Vert wP_{s_k,r}c\Vert\ge\Vert w\Vert\Vert P_{s_k,r}c\Vert/\Lambda$. The inequalities

$\Vert c_{\xi,r}-\frac{P_{n_\omega}v}{\Vert P_{n_\omega}v\Vert}\Vert\le\frac\varepsilon2$ ($r\ge n_\varepsilon$) and $\Vert c_{\xi,r}-c_{\xi,s}\Vert\le\varepsilon$ ($r,s\ge n_\varepsilon$)
follow if we chose $n_\varepsilon$ large enough.

To check (\ref{randc}) we bound $\Lambda(c_{\omega,n})=\Lambda(P_nc)$. We use Lemma \ref{mathcalI} and apply Lemma \ref{lL}: since $S_0$ is the product of at most three matrices of the set $\{A,B,C\}$ by a power of $A$ or $C$, and by a power of $B$, since $\Lambda(M^n)$ and $\lambda(M^n)$ are in $O(n)$ for $M=A$, $B$ or $C$, $\Lambda(P_nc)=O(n)$ and (\ref{randc}) holds.

(\ref{Mandc}) also holds because $\mathcal I(c_\omega)$ contains $\{1,3,5\}$ or, if $\forall n\ \omega_n=0$ (resp. if $\forall n\ \omega_n=2$), $\mathcal I(c_\omega)=\{2,3,5,6,7\}$ (resp. $\{1,3,4\}$).\end{proof}

\section{Upper triangular $3\times3$ matrices}\label{3}Let $A_n=\left(\begin{smallmatrix}a_n^{1,1}&a_n^{1,2}&a_n^{1,3}\\0&a_n^{2,2}&a_n^{2,3}\\0&0&a_n^{3,3}\end{smallmatrix}\right)$, with $a_n^{i,j}>0$, belong to a finite set; this imply that the ratios $\frac{a_n^{i,j}}{a_n^{i',j'}}$ are bounded. Because of the left-eigenvector ${^tu_1}$ Theorem \ref{aediv} does not apply, and Theorem~\ref{rankone}(ii) is not usefull because one can compute$$P_n=\left(\begin{smallmatrix}\alpha^{1,1}_n&\alpha^{2,2}_ns_n^{1,2}&\alpha^{3,3}_n(s_n^{1,3}+t_n)\\0&\alpha^{2,2}_n&\alpha^{3,3}_ns^{2,3}_n\\0&0&\alpha^{3,3}_n\end{smallmatrix}\right)\text{ with }\left\{\begin{array}{l}\alpha^{i,i}_n:=a_1^{i,i}\cdots a_n^{i,i}\\s^{i,j}_n:=\sum_{k=1}^n\alpha^{i,i}_{k-1}a^{i,j}_k/\alpha^{j,j}_k\\t_n:=\sum_{k=2}^ns_{k-1}^{1,2}\alpha^{2,2}_{k-1}a^{2,3}_k/\alpha^{3,3}_k.\end{array}\right.$$To prove the existence of $\lim_{n\to\infty}\frac{P_n}{\Vert P_n\Vert}$ and $\lim_{n\to\infty}\frac{P_nv}{\Vert P_nv\Vert}$ ($v$ positive) we look at the ratios between the entries of $P_n$. We use the following general~fact: any sequence of the form $n\mapsto\frac{\sum_1^nu_kv_k}{\sum_1^nv_k}$  with $u_k$ positive non-decreasing, $v_k$ positive, is itself non-decreasing and, if $\sum_1^\infty v_k=\infty$, it has the same limit as $n\mapsto u_n$. So if \hbox{$\sum_{n\ge1}\frac{\alpha^{2,2}_n}{\alpha^{3,3}_n}=\infty$} (or equivalently if $\lim_{n\to\infty}s_n^{2,3}=\infty$), $\frac{t_n}{s_n^{2,3}}$ has the same limit as $s_n^{1,2}$.

$1^{\rm st}$ case: $\sum_{n\ge1}\frac{\alpha^{1,1}_n}{\alpha^{2,2}_n}$ and $\sum_{n\ge1}\frac{\alpha^{2,2}_n}{\alpha^{3,3}_n}$ are infinite, in this case $\lim_{n\to\infty}\frac{P_n}{\Vert P_n\Vert}$ has the form $\left(\begin{smallmatrix}a&b&c\\0&0&0\\0&0&0\end{smallmatrix}\right)$ and $\lim_{n\to\infty}\frac{P_nv}{\Vert P_nv\Vert}=\left(\begin{smallmatrix}1\\0\\0\end{smallmatrix}\right)$.

$2^{\rm nd}$ case: $\sum_{n\ge1}\frac{\alpha^{1,1}_n}{\alpha^{2,2}_n}<\infty=\sum_{n\ge1}\frac{\alpha^{2,2}_n}{\alpha^{3,3}_n}$, then $\lim_{n\to\infty}\frac{\alpha^{1,1}_n}{\alpha^{2,2}_n}=0$. Since $\lim_{n\to\infty}s_n^{2,3}=\infty$, this implies $\lim_{n\to\infty}\frac{s^{1,3}_n}{s^{2,3}_n}=0$ and $\lim_{n\to\infty}\frac{P_n}{\Vert P_n\Vert}$ has the form $\left(\begin{smallmatrix}0&a&b\\0&c&d\\0&0&0\end{smallmatrix}\right)$, $ad-bc=0$, and $\exists x,y\ \forall v>0\ \lim_{n\to\infty}\frac{P_nv}{\Vert P_nv\Vert}=\left(\begin{smallmatrix}x\\y\\0\end{smallmatrix}\right)$.

$3^{\rm rd}$ case: $\sum_{n\ge1}\frac{\alpha^{1,1}_n}{\alpha^{2,2}_n}=\infty>\sum_{n\ge1}\frac{\alpha^{2,2}_n}{\alpha^{3,3}_n}$. In this case $\lim_{n\to\infty}\frac{P_n}{\Vert P_n\Vert}$ can have the form $\left(\begin{smallmatrix}a&b&c\\0&0&0\\0&0&0\end{smallmatrix}\right)$, or $\left(\begin{smallmatrix}a&b&c\\0&0&d\\0&0&e\end{smallmatrix}\right)$ and in this case $\lim_{n\to\infty}\frac{P_nv}{\Vert P_nv\Vert}$ depends on $v$.

$4^{\rm th}$ case: $\sum_{n\ge1}\frac{\alpha^{1,1}_n}{\alpha^{2,2}_n}$ and $\sum_{n\ge1}\frac{\alpha^{2,2}_n}{\alpha^{3,3}_n}$ are finite. In this case $\lim_{n\to\infty}\frac{P_n}{\Vert P_n\Vert}$ has the form $\left(\begin{smallmatrix}a&b&c\\0&d&e\\0&0&f\end{smallmatrix}\right)$ with $abcdef\ne0$, and $\lim_{n\to\infty}\frac{P_nv}{\Vert P_nv\Vert}$ depends on $v$.

\vfill
\section{The triangular form}\label{tptc}

\begin{definition}Let

$$\begin{array}{lcl}
\mathcal N(A)&:=&\text{ the number of distinct column of $\mathcal Z(A)$}
\\
\mathcal T_\mathcal J&:=&\text{the set of the upper-block-triangular $d\times d$ matrices}\\&&\text{with respect to the partition $\mathcal J=\{J_1,\dots,J_\delta\}$ of $\{1,\dots,d\}$},\\&&(\text{this means that }\mathcal I(A)\subset\cup_k((J_1\cup\dots\cup J_k)\times J_k))\\
\mathcal T_\mathcal J^1&:=&\text{the set of the matrices }A\in\mathcal T_\mathcal J\text{ such that }\mathcal N(B_{i',j'})=1\\&&\text{for any block }B_{i',j'}:=(a_{i,j})_{(i,j)\in J_{i'}\times J_{j'}}.\end{array}$$
\end{definition}


\begin{theorem}\label{triangular}Let $\mathcal A=(A_n)_{n\ge1}$ be a sequence of nonnegative $d\times d$ matrices. There exist an increasing sequence of nonnegative integers $(n_k)_{k\ge1}$ and a partition $\mathcal J=\mathcal J_\mathcal A$ (unique if we choose $(n_k)_{k\ge1}$ minimal for the lexicographic order) such that, for any $1\le k<\ell$, $P_{n_k,n_\ell}\in T^1_\mathcal J$ and $\mathcal Z(P_{n_k,n_\ell})$ does not depend on $(k,\ell)$.

\end{theorem}

\begin{proof}[{\bf Proof}]Let $\delta=\delta_\mathcal A:=\lim_{n\to\infty}\limsup_{n'\to\infty}\mathcal N(P_{n,n'})$.\begin{lemma}\label{delta}There exists $n_1\in\mathbb N$ such that, for any $n\ge n_1$,

$\begin{array}{ll}\exists H_n\text{ infinite subset of }\{n,n+1,\dots\}\ &\mathcal N(P_{n,n'})=\delta\ (n'\in H_n)\\&\mathcal N(P_{n,n'})\le\delta\ (n'\ge\min H_n).\end{array}$
\end{lemma}

\begin{proof}[{\bf Proof}]We note first that $\mathcal N(P_{n,n'})\le\mathcal N(P_{n+1,n'})$ because, if $\mathcal Z(P_{n,n'}u_{j_1})$, $\mathcal Z(P_{n,n'}u_{j_2}),\dots$ are some distinct columns of $\mathcal Z(P_{n,n'})$, then since $P_{n,n'}=A_{n+1}P_{n+1,n'}$ the columns $\mathcal Z(P_{n+1,n'}u_{j_1}),\mathcal Z(P_{n+1,n'}u_{j_2}),\dots$ are necessarily distinct.

Consequently the sequence $n\mapsto\limsup_{n'\to\infty}\mathcal N(P_{n,n'})$ is non-decreasing. It is constant from a rank $n=n_1$ and Lemma \ref{delta} follows.\end{proof}




We chose, among the elements of $H_{n_1}$, some integers $n_2<n_3<\dots$ such that $\mathcal I(P_{n_1,n_2})=\mathcal I(P_{n_1,n_3})=\dots$. Using the last inequality of Lemma \ref{delta}, we can chose $n_2,n_3,\dots$ large enough to have $\mathcal N(P_{n_k,n_\ell})\le\delta$ when $1\le k<\ell$.

There exist $I_1,\dots,I_\delta$ (distinct), $J_1,\dots,J_\delta$ (partition of $\{1,\dots,d\}$) such that$$\textstyle\forall k>1\ \mathcal I(P_{n_1,n_k})=\cup_{h=1}^\delta (I_h\times J_h).$$At least one of the $I_h$ does not contain any $I_{h'}$ with $h'\ne h$. Since $P_{n_0,n_\ell}=P_{n_0,n_k}P_{n_k,n_\ell}$ and since $\mathcal I(P_{n_0,n_k})=\mathcal I(P_{n_0,n_\ell})$, we deduce that the matrix $P_{n_k,n_\ell}$ cannot have nonnull elements with column-index in $J_h$ and row-index in the complementary of $J_h$. So $\mathcal Z(P_{n_k,n_\ell})$ has the form $P\left(\begin{smallmatrix}B_{k,\ell}&C_{k,\ell}\\0&D_{k,\ell}\end{smallmatrix}\right)P^{-1}$ with $P$ permutation matrix, $\mathcal N(B_{k,\ell})=1$ and $\mathcal N(D_{k,\ell})=\delta-1$ (if $\delta=1$, $B_{k,\ell}$ is the matrix $\mathcal Z(P_{n_k,n_\ell})$ itself).

Assuming that the theorem is true for the sequences of matrices $\mathcal A$ such that $\delta_\mathcal A=\delta-1$, we can suppose $D_{k,\ell}=D$ constant, after replacing $(n_k)_{k\in\mathbb N}$ by a suitable subsequence.

Denoting by $B_k$ (resp. $C_k$) the matrix $B_{k,k+1}$ (resp. $C_{k,k+1}$), one has $\left(\begin{smallmatrix}B_{k,\ell}&C_{k,\ell}\\0&D\end{smallmatrix}\right)=\mathcal Z(\prod_{i=k}^{\ell-1}\left(\begin{smallmatrix}B_i&C_i\\0&D\end{smallmatrix}\right))$ and, since $\mathcal N(B_k)=1$, $B_{k,\ell}=B_k$ except if one of the $B_i$ is null. Replacing $(n_k)_{k\in\mathbb N}$ by a suitable subsequence, $B_{k,\ell}=B_k=0$ in this last case.

$C_{k,\ell}=\mathcal Z(C_kD+B_k\sum_{k<i<\ell-1}C_iD+B_kC_{\ell-1})$. Since the sequence $\ell\mapsto\mathcal Z(\sum_{k<i<\ell-1}C_i)$ is not-decreasing (the entries are not-decreasing), it converges to a $(0,1)$-matrix that we denote by $\mathcal Z(\sum_{k<i<\infty}C_i)$, by abuse of notation. Now $k\mapsto\mathcal Z(\sum_{k<i<\infty}C_i)$ is not-increasing and there exists $\kappa$ such that this sequence converges to $\mathcal Z(\sum_{\kappa<i<\infty}C_i)=:S$.

There exists $K$, infinite set of integers, such that $(B_k,C_k,C_{k-1})=\text{constant}$

$=:(B,C,C')$, so, replacing $(n_k)_{k\ge1}$ by a suitable subsequence one has

$\mathcal Z(P_{n_k,n_\ell})=P\left(\begin{smallmatrix}B&\mathcal Z(CD+BSD+BC')\\0&D\end{smallmatrix}\right)P^{-1}$ and $P_{n_k,n_\ell}\in T^1_\mathcal J$ ($1\le k<\ell$).

To prove the existence of a minimal sequence $(n_k)_{k\ge1}$ for the lexicographical order, that satisfies both conditions of Theorem \ref{triangular}, we consider more generally a set $\mathcal S\ne\emptyset$ of sequences $(n_k)_{k\ge1}$ defined by a condition $\mathcal C$, and we assume the equivalence: $(n_k)_{k\ge1}$ satisfies $\mathcal C$ if and only if the finite sequence $(n_k)_{k=1}^\ell$ satisfies $\mathcal C$ for any $\ell\ge1$. Considering the set $\mathcal S_u$ of the sequences less or equal to a fixed element $u=(n_k)_{k\ge1}\in\mathcal S$, one can easily define by induction the element of $\mathcal S$ minimal for the lexicographical order.\end{proof}

\end{document}